\numberwithin{equation}{section}
\def\ca{{\mathcal A}}
\def\cb{{\mathcal B}}
\def\cc{{\mathcal C}}
\def\cd{{\mathcal D}}
\def\cf{{\mathcal F}}
\def\cas{{\mathcal S}}
\def\ct{{\mathcal T}}
\def\bc{{\mathbb C}}
\def\bn{{\mathbb N}}
\def\bz{{\mathbb Z}}
\def\d{\delta}
\def\e{\varepsilon}
\def\g{\gamma}
\def\l{\lambda}
\def\r{\rho}
\def\f{\varphi}
\theoremstyle{plain}
\newtheorem{lemma}{Lemma}[section]
\newtheorem{proposition}[lemma]{Proposition}
\newtheorem{theorem}[lemma]{Theorem}
\newtheorem{corollary}[lemma]{Corollary}
\theoremstyle{definition}
\newtheorem{remark}[lemma]{Remark}
\newtheorem{definition}[lemma]{Definition}
\begin{document}

\title[ Factorizations and complete boundedness ]{\textsc{ Bilinear forms, Schur multipliers,  complete boundedness  and duality }}

\author[E.~Christensen]{Erik Christensen}
\address{\hskip-\parindent
Erik Christensen, Mathematics Institute, University of Copenhagen, Copenhagen, Denmark.}
\email{echris@math.ku.dk}
\date{\today}
\subjclass[2010]{ Primary: 15A23, 46B25, 46L07. Secondary: 15A60, 15A63, 47A30, 47L25.}
\keywords{ Grothendieck inequality, matrix factorization, minimal norm,  Schur product,  completely bounded, column norm, operator space, duality } 

\begin{abstract}
Grothendieck's inequalities for operators and bilinear forms imply some factorization results for complex  $m \times n$  matrices. Based on the theory of operator spaces and completely bounded mappings we present norm optimal versions of these results and two norm optimal factorization results related to the Schur product.  We show that the spaces of respectively bilinear forms and Schur multipliers are conjugate duals to each other with respect to their completely bounded norms.  
 \end{abstract}

\maketitle

\section{Introduction and Notation}

A complex scalar valued $m \times n $ matrix $X$ may represent many different things in pure and applied mathematics. In this article we will focus on the interpretations of $X$ in  4 different ways \begin{itemize} 
\item[(i)] 
As the matrix for a linear mapping $F_X$ of the $n$  dimensional abelian  C*-algebra $\ca_n \,:= \,C(\{1, \dots, n\}, \bc)$ into the $m$ dimensional Hilbert space $\bc^m.$
\item[(ii)]
As the kernel for a bilinear form $B_X$ on the product $\ca_m \times \ca_n$ of C*-algebras given by $$B_X(a, b) \, := \, \sum_{i = 1}^m\sum_{j=1}^n X_{(i,j)}a(i)b(j) .$$
\item[(iii)] 
 As a  a linear mapping $S_X$ on $(M_{( m \times n)}(\bc), \|.\|_\infty)$ induced by Schur multiplication by $X$  - or entry wise multiplication - given by $$S_X(A)_{(i,j)}\, : = \, X_{(i,j)}A_{(i,j)}.$$ 
\item[(iv)] 
  As a  a bilinear mapping $T_X$ of $\ca_m \times M_{( m \times n)}(\bc)$ into the Hilbert space $\bc^n$ given as   $$T_X(a,B)_j \, : = \, \sum_{i =1}^m a(i) X_{(i,j)}B_{(i,j)}.$$ 
\end{itemize}
 The first three interpretations are very well known and have been studied in many ways for more than a century, but he fourth was imposed on us by the research done for the first ones in the preparation of this article. The research we mentioned is based on a closer look at the connections between the  classical Grothendieck inequalities  and the theory of completely bounded multilinear mappings.  In this way we have noticed that it is possible to make some of the existing results on factorization of matrices sharper. We will not define the concepts named {\em Grothendieck inequalities } or {\em complete boundedness } now, but leave that to the last part of the introduction. 
  The short version of the content of this article is as follows: Look at any of the interpretations  above of a scalar matrix and then use our recent  uniqueness result \cite{C2} for Stinespring representations of completely bounded multilinear mappings to obtain an optimal factorization of the matrix $X,$ which shows that the completely bounded norm of this particular operator is in fact a certain factorization norm applied to the matrix.   Then notice that in any of the cases (i) and (ii)  Grothendieck's inequalities and the theory of operator spaces imply that the completely bounded norm is dominated by Grothendieck's constant times the norm of the operator. In fact Grothendieck's constant  is the  minimal positive real that  may be used in all cases. 
 
    The well known  estimates for an  upper bound for the completely bounded norm come  in the cases (i) and (ii) from, respectively, Grot-hendiek's inequality for mappings from an abelian C*-algebra into a Hilbert space and  from his inequality on bilinear forms on a pair of abelian C*-algebras \cite{Gr}, \cite{Pi3}.  
  In the third  case Smith's work \cite{Sm} shows that the norm of a Schur multiplier equals it's completely bounded norm. In the fourth case our duality result implies  that  the completely bounded norm equals the norm.
    
  The norm optimality of the factorization results are based on the fact that the Stinespring representations for completely bounded linear or multilinear mappings contain a statement on optimality with respect to the completely bounded norm, \cite{Pa0}, \cite{CS}, \cite{PS}, \cite{Pa}. 
 This means - as we see it - that the norm optimal factorization results we obtain are all part of the theory of operator spaces and completely bounded mappings, whereas the existing factorization results   are consequences of  Grothendiek's fundamental work \cite{Gr}. 
   
We need a little more notation in order to make an explicit formulation of the  results we obtain. 
 We will use small Greek letters to denote vectors in a Hilbert space and write $\|\xi\|_2$ to denote the norm of the vector $\xi$ in some Hilbert space. For a vector $\xi $ in $\bc^n$ we let $\Delta_n(\xi)$ denote the diagonal matrix in $M_n(\bc),$ whose diagonal equals $\xi,$ and  we let the  expression $\xi_|$ denote the column matrix in $M_{(n,1)}(\bc)$ with entries $\xi_i,$ and likewise $\xi_{-}$ denotes the row matrix in $M_{(1,n)}(\bc)$ with entries from $\xi.$ In some instances the expressions $\xi_|$ and $\xi_{-}$ will denote the corresponding operators between the Hilbert spaces $\bc$ and $\bc^n.$ For a  matrix  $T$ of scalars we let $\|T\|_\infty $ denote it's operator norm, and  $\|T\|_2 $ denotes it's Hilbert-Schmidt norm defined as $\|T\|_2\, = \,  \big(\sum_{i,j} |T_{(i,j)}|^2\big)^\frac{1}{2}.$  We will use the important fact that for a vector $\xi $ in $\bc^n$ we have $\|\xi\|_2 \, = \, \|\xi_|\|_\infty \, = \, \|\xi_{-}\|_\infty.$  
We also need the concepts named column and row norms of a matrix. For a scalar $m \times n $ matrix $X$ it's column norm is  just the maximum over the norms of all the columns, and it is denoted $\|X\|_c.$  It follows by the definition of the product of matrices that $\|X\|_c^2 \, = \, \|\mathrm{diag}(X^*X)\|.$ The row norm  $\|X\|_r $ of $X$ is defined as the column norm of $X^*,$  and we have $\|X\|_r^2 \, = \, \|\mathrm{diag}(XX^*)\|.$

We return to the items (i), .. , (iv) and let $r$ denote the rank of $X,$ then the results of this article may  be presented as

 \begin{align}  \label{results}  
 \mathrm{(i)} \quad \,\, \, \,  & \quad \exists \xi \in \bc^n, \,  \exists T \in M_{(m,n)}(\bc) \\ 
 \notag X\,& = \, T\Delta_n(\xi), \\ \notag & \quad   \|T\|_\infty \|\xi\|_2 = \|F_X\|_{cb} \, \leq \, k_G^\bc\|F_X\|.  \\ \notag & \\ \notag
 \mathrm{(ii)} \quad \,\, \, \,  & \quad \exists \xi \in \bc^n, \, \exists \eta \in \bc^m ,\, \exists T \in M_{(m,n)}(\bc) \\  \notag  \, \, X  \,& = \, \Delta_m(\eta)^* T \Delta_n(\xi),\,  \\ & \notag \, \, \quad \|\eta\|_2\|T\|_\infty \|\xi\|_2 = \|B_X\|_{cb} \, \leq \, K_G^\bc\|B_X\|. \\ \notag & \\ \notag 
 \mathrm{(iii)} \quad \,\, \, \,  & \quad \exists L \in  M_{(r,m)}(\bc),  \, \exists R \in M_{(r,n)}(\bc) \\ \notag  X \,&  = \, L^*R,\, \text{rank}(L) \, = \,  \text{rank}(R) \, = \,\text{rank}(X)\,=\, r, \\ \notag  & \, \, \quad \|L\|_c\|R\|_c\, = \, \|S_X\|_{cb} = \|S_X\|. \\ \notag & \\ \notag 
 \mathrm{(iv)} \quad \,\, \, \,  & \quad \exists \g \in \bc^m \exists L \in  M_{(r,m)}(\bc),  \, \exists R \in M_{(r,n)}(\bc) \\ \notag  X \,&  = \, \Delta_m(\g)  L^*R,\, \text{rank}(L) \, = \,  \text{rank}(R) \, = \,\text{rank}(X)\,=\, r, \\ \notag  & \, \, \quad \|\g\|_2\|L\|_c\|R\|_c\, = \, \|T_X\|_{cb}.
\end{align}
 
A first look at the items (ii) and (iii)  does not show any relation between the 2 interpretations of a scalar matrix, but we will show that in some sense the two concepts may be considered as dual to each other with respect to the inner product on $M_{(m \times n)}$ given by
$$\forall X, Y \in M_{(m \times n)}(\bc): \, \langle X, Y \rangle \,:= \, \mathrm{Tr}_n (Y^*X).$$ 
A similar relations holds between the items (i) and (iv) and this was the reason why, we found  the interpretation $T_X$ of a matrix $X.$

   The present article focusses on the use  of the theory of operator spaces and completely bounded mappings, a subject which now is well described in the literature \cite{ER}, \cite{Pa}, \cite{Pi1}, \cite{Pi2}.  Pisier has made many deep and impressing contributions in this area of research and quite a few of them relate closely to some parts of this article, see \cite{Pi0},  \cite{Pi1}, \cite{Pi2}, \cite{PS} to mention a few.  
 The factorization aspect is discussed in the chapters 3 and 5 of \cite{Pi1} in a more abstract setting, but we have not tried to find the exact relations between those results and the ones we present here. 
 It is well known  that Grothendieck's original article \cite{Gr} contains results on operators which either factor through a Hilbert space or through  a commutative unital C*-algebra, but we have not tried to relate this general theme to our work. On the other hand, the basic results we present rely on the article by Grothendieck, which Pisier in the article \cite{Pi3} names the {\em résumé}. 
 In the {\em résumé } Grothendieck shows a factorization result, for bilinear forms on a product of two abelian C*-algebras, which now is known as the {\em  Grothendieck inequality.} 
 A reformulation of this inequality tells that there exits  a universal positive constant $K_G^\bc$ such that  any complex $m \times n $ matrix  with bilinear norm  $\|B_X\|$ may be factored as \begin{equation} \label{GI} X \, = \, \Delta_m(\eta)^* T \Delta_n(\xi),\,  \|\xi\|_2 = \|\eta\|_2 = 1, \, \|T\|_\infty \leq K_G^\bc\|B_X\|,
\end{equation} 
This factorization of $X$ is identical to   the one described in (\ref{results}) item (ii) except for the extension  $\|T\|_\infty \,= \, \|B_X\|_{cb} \, \leq \, K_G^\bc\|B_X\|,$ so in order to be more precise we will describe the concept named {\em completely bounded .} 

A bounded linear mapping $\f$ of a subspace  $\cas$ of operators on some Hilbert space $H$ into some $B(K)$ for some Hilbert space $K$  is said to be completely bounded if there exists a positive $c$ such that  for any natural number $k$ the mapping $\f_k := \f \otimes \mathrm{id}_{M_k(\bc)} : \cas\otimes M_k(\bc) \to  B(H)\otimes M_k(\bc) $ has norm at most $c.$ If $\f$ is completely bounded, it's completely bounded norm $\|\f\|$ is defined as the sup over the norms $\|\f_k\|.$ 
In \cite{CS} the notion of complete boundedness was extended to multilinear mappings between spaces of bounded operators on Hilbert spaces in the following way. For a bounded  bilinear mapping $\Phi : \cas_1 \times \cas_2 \to B(K)$ defined on the product of  a pair of operator spaces $S_1 \subseteq B(K_1)$ and $S_2 \subseteq B(K_2)$  we define $\Phi_k : \big(\cas_1 \otimes M_k(\bc)\big)  \times  \big(\cas_2 \otimes M_k(\bc) \big) \to B(K) \otimes M_k(\bc)$ by a formula, which is analogous to the matrix multiplication. 
\begin{align} \label{CbMult} &\forall A \in M_k(\cas_1) \, \forall B \in M_k(\cas_2) \, \forall i,j \in \{1, \dots , k\}: \\ \notag  &\Phi_k(
A, B)_{(i,j) }\, := \, \sum_{l =1 }^k \Phi(A_{(i,l)}, B_{(l,j)}) .\end{align}
In the proof of Theorem \ref{OpThm} we shall see that equation (\ref{GI}) implies that $\|B_X\|_{cb} \, \leq \, K_G^\bc\|B_X\|.$

Grothendieck's {\em résumé} \cite{Gr} also shows that that the Grothendieck inequality may be used to describe those complex $m \times n $ matrices which are contractions as Schur multipliers.  The following theorem is a consequence of Proposition 7  of \cite{Gr}.

\begin{theorem} \label{GrT} 
Any complex $m \times n$ matrix $X$ is contained in the closed convex hull of $m \times n$ scalar  matrices $Y$  of the form $y_{(i,j)} = \bar{ l_i } r_j$ with $ |l_i|\,  \leq \,( K_G^\bc)^\frac{1}{2}\|S_X\|^\frac{1}{2}$ and $ |r_j| \,\leq \,(K_G^\bc)^\frac{1}{2}\|S_X\|^\frac{1}{2}.$
\end{theorem}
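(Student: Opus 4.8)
The plan is to deduce Theorem~\ref{GrT} from the factorization form of Grothendieck's inequality, equation~(\ref{GI}), together with the Schur multiplier factorization recorded as item (iii) of (\ref{results}), by a Hahn--Banach (bipolar) duality argument inside the finite-dimensional inner product space $\big(M_{(m\times n)}(\bc),\langle\cdot,\cdot\rangle\big)$ with $\langle X,Y\rangle=\mathrm{Tr}_n(Y^*X)$. Put $c:=(K_G^\bc)^{1/2}\|S_X\|^{1/2}$, so that $c^2=K_G^\bc\|S_X\|$, and let $C$ be the closed convex hull of the matrices $Y$ with $y_{(i,j)}=\overline{l_i}r_j$ and $|l_i|,|r_j|\le c$. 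Multiplying the vector $(l_i)$ by a scalar of modulus at most $1$ keeps us inside this generating set, so $C$ is closed, bounded, convex and balanced; hence by the bipolar theorem $X\in C$ if and only if
\[
|\langle X,Z\rangle|\ \le\ \sup_{Y\in C}|\langle Y,Z\rangle|\qquad\text{for every }Z\in M_{(m\times n)}(\bc).
\]
A direct computation of this support function — substitute $l_i=c\,\overline{s_i}$, $r_j=c\,t_j$ with $|s_i|,|t_j|\le1$, so that $\langle Y,Z\rangle=c^2\sum_{i,j}\overline{z_{(i,j)}}\,s_it_j$ — gives $\sup_{Y\in C}|\langle Y,Z\rangle|=c^2\|B_{\overline{Z}}\|$, where $\overline{Z}$ is the entrywise conjugate of $Z$. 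Thus the theorem reduces to the single inequality $|\langle X,Z\rangle|\le c^2\|B_{\overline{Z}}\|=K_G^\bc\|S_X\|\,\|B_{\overline{Z}}\|$, valid for all $Z$.

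For that inequality I would feed $\overline{Z}$ into (\ref{GI}): write $\overline{Z}=\Delta_m(\eta)^*T\Delta_n(\xi)$ with $\|\eta\|_2=\|\xi\|_2=1$ and $\|T\|_\infty\le K_G^\bc\|B_{\overline{Z}}\|$, and feed $X$ into item (iii): $X=L^*R$ with $\|L\|_c\|R\|_c=\|S_X\|$, rescaled so that $\|L\|_c=\|R\|_c=\|S_X\|^{1/2}$. Writing $\langle X,Z\rangle=\sum_{i,j}\overline{z_{(i,j)}}\,x_{(i,j)}$, substituting $\overline{z_{(i,j)}}=\overline{\eta_i}\,T_{(i,j)}\,\xi_j$ and $x_{(i,j)}=\sum_k\overline{L_{(k,i)}}R_{(k,j)}$, and regrouping yields
\[
\langle X,Z\rangle\ =\ \sum_k\ \sum_{i,j}\ \overline{\eta_iL_{(k,i)}}\;T_{(i,j)}\;\xi_jR_{(k,j)}\ =\ \sum_k\ \langle T q^{(k)},\,p^{(k)}\rangle ,
\]
with $p^{(k)}:=(\eta_iL_{(k,i)})_i\in\bc^m$ and $q^{(k)}:=(\xi_jR_{(k,j)})_j\in\bc^n$. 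Applying Cauchy--Schwarz twice (first inside each summand, then over $k$) bounds the modulus by $\|T\|_\infty\big(\sum_k\|p^{(k)}\|_2^2\big)^{1/2}\big(\sum_k\|q^{(k)}\|_2^2\big)^{1/2}$; and $\sum_k\|p^{(k)}\|_2^2=\sum_i|\eta_i|^2\big(\sum_k|L_{(k,i)}|^2\big)\le\|L\|_c^2\sum_i|\eta_i|^2=\|L\|_c^2=\|S_X\|$ since $\sum_i|\eta_i|^2=1$ and $\sum_k|L_{(k,i)}|^2$ is the squared Euclidean norm of the $i$-th column of $L$; likewise $\sum_k\|q^{(k)}\|_2^2\le\|R\|_c^2=\|S_X\|$. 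Hence $|\langle X,Z\rangle|\le\|T\|_\infty\,\|S_X\|\le K_G^\bc\|B_{\overline{Z}}\|\,\|S_X\|=c^2\|B_{\overline{Z}}\|$, which is exactly the bipolar criterion, so $X\in C$.

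I expect no genuine conceptual obstacle here — all the depth sits in (\ref{GI}) and in item (iii), both of which may be assumed — so the only care needed is bookkeeping: getting the support function of $C$ right (it is $c^2\|B_{\overline{Z}}\|$; since $\|B_{\overline{Z}}\|=\|B_Z\|$ the normalization in the statement is unambiguous), and tracking complex conjugates so that the inner sum really collapses to $\langle Tq^{(k)},p^{(k)}\rangle$ and not a conjugated variant. An alternative one-line route is simply to quote Proposition~7 of \cite{Gr}, which already phrases the conclusion as membership in the closed convex hull of rank-one patterns with the bound governed by $K_G^\bc\|S_X\|$; the duality computation above is just that proposition rewritten in the present normalization $c=(K_G^\bc\|S_X\|)^{1/2}$.
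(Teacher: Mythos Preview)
The paper does not give its own proof of Theorem~\ref{GrT}: it records the result as ``a consequence of Proposition~7 of \cite{Gr}'' and points to \cite{Pi3} for proofs and history. Your closing remark --- just quote Grothendieck --- is therefore exactly the paper's treatment.

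Your bipolar argument is correct as written: the support function of the balanced convex hull $C$ is indeed $c^2\|B_{\overline Z}\|$, and the Cauchy--Schwarz estimate with the $p^{(k)},q^{(k)}$ vectors delivers $|\langle X,Z\rangle|\le \|T\|_\infty\|L\|_c\|R\|_c\le K_G^\bc\|S_X\|\,\|B_{\overline Z}\|$. Two remarks on the comparison. First, you are invoking item~(iii) of (\ref{results}), i.e.\ Theorem~\ref{LR2}, which is proved only \emph{later} in the paper; there is no circularity --- the proof of Theorem~\ref{LR2} rests on \cite{Pa} Theorem~8.7 and not on Theorem~\ref{GrT} --- but it does invert the narrative order, since the paper presents Theorem~\ref{GrT} as historical motivation and the factorization in item~(iii) as its subsequent sharpening. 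Second, your argument is essentially a preview of the duality in Section~3: the computation $\sup_{Y\in C}|\langle Y,Z\rangle|=c^2\|B_{\overline Z}\|$ is equation~(\ref{PolB}), and the pairing estimate $|\langle L^*R,\Delta_m(\eta)^*T\Delta_n(\xi)\rangle|\le\|L\|_c\|R\|_c\|\eta\|_2\|T\|_\infty\|\xi\|_2$ is the inequality underlying (\ref{PolCB}) (compare the use of (\ref{dual}) in the proof of Theorem~\ref{Duality}). So what your route buys is a self-contained derivation from the paper's own toolkit; what the paper's route buys is the correct historical attribution without forward references.
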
 This theorem is presented as Theorem 3.2 in \cite{Pi3} and that article contains proofs, extensions and historical notes. 
This theorem was later improved to the following sharper result, lowering the constant to 1. There exists  vectors $\xi_j$ and $\eta_i$ in the unit ball of some Hilbert space such that $X_{(i,j) }\, = \|S_X\|  \, \langle \xi_j, \, \eta_i\rangle.$ 
Here we present an  improvement, of this factorization to one with $X=L^*R,$ such that both $L$ and $R$ are finite scalar matrices of the  same rank as $X,$ and the norms of all the columns in $L$ and $R$ are at most $\|S_X\|^{(1/2)}.$ 
This factorization comes easily from the existing literature on completely bounded mappings see  \cite{Pa} Theorem 8.7 (iii), but the statement on the ranks of $L$ and $R,$ seems not to be noticed before.

In section 2 we will give the details in the proofs of the results on optimal factorizations as mentioned in the abstract and discussed above.

In Section 3, we  show that the  compact convex sets in the  $m \times n $ complex matrices defined by $\cc \cb_{(m,n)}  := \{X \in M_{(m,n)}(\bc)\, : \, \|B_X\|_{cb } \leq 1 \} $ and $\cc\cas_{(m,n)} := \{ Y \in M_{(m,n)} (\bc) \, : \, \|S_Y\| \leq 1\}$ are polars of each other with respect to the inner product $\mathrm{Tr}(Y^*X).$ We will also show  that the polar of the set $\cc\cf_{(m,n)}  \, : = \, \{ X \in M_{(m,n)}(\bc) \, : \, \|F_X\|_{cb} \leq 1\,\}, $ equals the set $\cc\ct_{(m,n)} \, : = \, \{  Y \in M_{(m,n)}(\bc) \, : \, \|T_Y\|_{cb} \leq 1\,\}. $ 

\section{ Factorizations }  
In the first place wee look at a complex $m \times n$ matrix $X$ as the kernel for a linear mapping $F_X$  of $\ca_n$ to $\bc^m.$ 
The mapping $F_X$ does not map into an operator space right away,  but the operator space $M_{(m,1)} (\bc) $ is isometrically isomorphic  to $\bc^m
$ and hence equipped with a natural structure as an operator space. We will then in the rest of this article assume that $F_X$ is an operator defined on the C*-algebra $\ca_n $ with image in $M_{(m,1)}(\bc),$ which in turn is a subspace of the C*-algebra $M_m(\bc).$ We will first show that $F_X$ is completely bounded with respect to this operator space structure and then find 2 natural Stinespring representations of this mapping, such that the factorization result drops out. 

\begin{theorem}   \label{OpThm} 
For any $X$ in $M_{(m,n)}(\bc) $ the mapping $F_X: \ca_n \to M_{(m,1)}(\bc)$ satisfies $ \|F_X\|_{cb} \leq k_G^\bc \|F_X\|.$ \newline
There exist a unit vector $\xi$ in $\bc^n$ and a matrix $C$ in $M_{(m,n)}(\bc) $ such that $\|C\|_\infty \,= \, \|F_X\|_{cb} $ and $X = C \Delta_n(\xi).$ 

A  norm optimal Stinespring representation may be obtained as \newline  $F_X(A) \, = \, C\Delta_n(A)\xi_|.$

\end{theorem}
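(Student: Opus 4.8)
The plan is to establish the three claims of Theorem \ref{OpThm} in sequence: first the complete boundedness estimate $\|F_X\|_{cb} \leq k_G^\bc\|F_X\|$, then the factorization $X = C\Delta_n(\xi)$ with the norm equality $\|C\|_\infty = \|F_X\|_{cb}$, and finally the recognition that $F_X(A) = C\Delta_n(A)\xi_|$ is a norm-optimal Stinespring representation. The first claim is where the classical Grothendieck content enters: the version of Grothendieck's inequality for operators from an abelian C*-algebra into a Hilbert space states precisely that $F_X : \ca_n \to \bc^m$ factors as $F_X(A) = W M_\mu(A)$ through an $L^\infty$-space (here a diagonal multiplication) with control on norms by $k_G^\bc\|F_X\|$; equivalently, $X = T\Delta_n(\xi)$ with $\|T\|_\infty\|\xi\|_2 \leq k_G^\bc\|F_X\|$. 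Since a diagonal multiplication operator on an abelian C*-algebra with values in a column Hilbert space has completely bounded norm equal to its bound (this is the standard fact that $A \mapsto (\text{row/column})\, M_\mu(A)$ is completely bounded with $\|\cdot\|_{cb} = \|\cdot\|$, using that the column operator space structure on $M_{(m,1)}(\bc)$ makes $\xi \mapsto \xi_|$ an isometry even at the matrix level), we get $\|F_X\|_{cb} \leq \|T\|_\infty\|\xi\|_2 \leq k_G^\bc\|F_X\|$.

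\emph{Next}, for the norm-optimal factorization I would invoke the Stinespring/Paulsen-type representation theorem for completely bounded maps into $B(H)$, specialized to the situation $F_X : \ca_n \to M_{(m,1)}(\bc) \subseteq M_m(\bc)$. A completely bounded map on a unital C*-algebra $\ca_n$ has a representation $F_X(A) = V^*\pi(A)W$ with $\|V\|\,\|W\| = \|F_X\|_{cb}$, where $\pi$ is a $*$-representation of $\ca_n$; because $\ca_n = C(\{1,\dots,n\})$, any representation decomposes as a direct sum of the point evaluations, so $\pi(A)$ is (a multiple of) $\Delta_n(A)$ acting on a sum of copies of $\bc^n$, and one can compress to a single copy. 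Writing out $F_X(A) = C\Delta_n(A)\xi_|$ in this form and comparing with the action of $X$ on the standard basis vectors $e_j \in \bc^n$ — note $F_X(e_j) = Xe_j = X_{\cdot,j}$ is the $j$-th column of $X$, while $C\Delta_n(e_j)\xi_| = \xi_j\, C e_j = \xi_j\, C_{\cdot,j}$ — identifies $X = C\Delta_n(\xi)$ entrywise. The normalization $\|\xi\|_2 = 1$ can be arranged by rescaling ($\xi \mapsto \xi/\|\xi\|_2$, $C \mapsto \|\xi\|_2 C$), after which $\|C\|_\infty = \|F_X\|_{cb}$ is exactly the optimality assertion of the Stinespring theorem, and the lower bound $\|F_X\|_{cb} \leq \|C\|_\infty\|\xi\|_2$ is automatic since $A \mapsto C\Delta_n(A)\xi_|$ is visibly completely bounded with that cb-norm bound (same column-operator-space computation as above). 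Combining the two inequalities gives the equality $\|C\|_\infty = \|F_X\|_{cb}$.

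\emph{Finally}, the statement that $F_X(A) = C\Delta_n(A)\xi_|$ is a \emph{norm optimal} Stinespring representation is essentially a restatement: the representation $\pi : \ca_n \to B(\bc^n)$, $\pi(A) = \Delta_n(A)$, together with the operators $\xi_| : \bc \to \bc^n$ and $C : \bc^n \to \bc^m$ (identifying $C$ with the corresponding map between Hilbert spaces), realizes $F_X$ as $C\,\pi(\cdot)\,\xi_|$ with $\|C\|_\infty\,\|\xi_|\|_\infty = \|C\|_\infty\,\|\xi\|_2 = \|F_X\|_{cb}$, which is the minimal possible value of such a product over all Stinespring-type factorizations — this minimality is precisely the content of the cited optimal Stinespring theorems \cite{Pa0}, \cite{CS}, \cite{Pa}.

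\emph{The main obstacle} I expect is not any single hard estimate but the bookkeeping needed to pass from the abstract Stinespring factorization $F_X = V^*\pi(\cdot)W$ (which lives between auxiliary Hilbert spaces of a priori uncontrolled dimension) down to the concrete form $C\Delta_n(\cdot)\xi_|$ with $C \in M_{(m,n)}(\bc)$ and $\xi \in \bc^n$: one must use that $\ca_n$ is finite-dimensional abelian so $\pi$ is a sum of the $n$ characters, then compress the possibly-large multiplicity space and the ambient space of the dilation down to $\bc^n$ on the source side and $\bc^m = M_{(m,1)}(\bc)$ on the target side without losing the norm equality. Equivalently, one must be careful that the column operator space structure on $M_{(m,1)}(\bc)$ is the one that makes the Hilbert-space norm of $\xi_|$ equal to its operator-space (cb) norm, so that the identification of $\|V\|$ with $\|\xi\|_2$ and of $\|W\|$ with $\|C\|_\infty$ is legitimate; this is exactly the point flagged in the excerpt that $F_X$ must be viewed as landing in $M_{(m,1)}(\bc)$ with its natural operator space structure rather than merely in $\bc^m$.
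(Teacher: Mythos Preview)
Your argument is correct, and the first stage (little Grothendieck inequality $\Rightarrow$ factorization $X=T\Delta_n(\xi)$ with $\|T\|_\infty\|\xi\|_2\le k_G^\bc\|F_X\|$ $\Rightarrow$ Stinespring form $F_X(a)=T\Delta_n(a)\xi_|$ $\Rightarrow$ $\|F_X\|_{cb}\le k_G^\bc\|F_X\|$) is exactly what the paper does. The difference is in how the \emph{optimal} factorization is extracted from an abstract norm-optimal Stinespring representation $F_X(a)=W^*\pi(a)V$. You propose to decompose $\pi$ as a direct sum of the $n$ characters of $\ca_n$ and then compress the multiplicity spaces to a single copy of $\bc^n$; this works, and the point you leave implicit is that since $V$ has domain $\bc$, the vector $V1\in K=\bigoplus_j K_j$ has a single component $v_j$ in each $K_j$, so the partial isometry $U:\bc^n\to K$ sending $e_j$ to $v_j/\|v_j\|$ gives $C:=W^*U$ and $\xi_j:=\|v_j\|$ with $X=C\Delta_n(\xi)$, $\|C\|_\infty\le\|W\|$, $\|\xi\|_2=\|V\|$. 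The paper takes a different route: it writes down a \emph{second}, explicit Stinespring representation $F_X(a)=X\Delta_n(a)(\Omega_n)_|$ using the all-ones vector $\Omega_n$, checks that this representation is minimal (provided $X$ has no zero columns), and then invokes the author's own uniqueness theorem for minimal Stinespring representations \cite{C2} to conclude that the abstract optimal minimal representation is unitarily equivalent to one with $K=\bc^n$ and $\pi=\Delta_n$; setting $\xi:=V1$ then gives $X=W^*\Delta_n(\xi)$ directly. Your compression argument is more elementary and self-contained for this particular theorem, while the paper's $\Omega$-vector-plus-uniqueness method is the unified template it reuses verbatim in Theorems~\ref{BilThm} and~\ref{Tx}, and is precisely where the result of \cite{C2} enters.
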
 

\begin{proof}
It is well known that the classical {\em little Grothendieck inequality} implies that there exist a unit vector $\xi$ in $\bc^n$ and a matrix $C$ in $M_{(m,n)} (\bc) $ such that  $X \, = \, C \Delta_n(\xi),$ and  $\|C\|_\infty \leq k_G^\bc\|F_X\|.$ 
Then we may write $$ \forall a \in \ca_n : \quad F_X(a)  \, = \, C \Delta_n(a) \xi_|. $$   We recall that $\xi_|$ sometimes denotes a matrix in $M_{(n,1)}(\bc)$  and sometimes an  operator in $B( \bc, \bc^n).$  In this equation it denotes an operator and we  have obtained a Stinespring representation of $F_X$ which proves the first  statement of the theorem. 
We will leave the Stinespring representation we just found and create 2 other ones now. 
We know that there is a norm  optimal and also  minimal Stinespring representation of $F_X$ which we denote in the following way, 
\begin{equation} \label{1stS}
\forall a \in \ca_n: \quad F_X(a)\, =  \,  W^*\gamma(a) V,
\end{equation} such that $\g $ is a representation of $\ca_n$ on a Hilbert space $K,$  $V $ is in $B(\bc, K)$ and $W$ is in $B(\bc^m, K)$ and $\|W\|\|V\| \, = \, \|F_X\|_{cb}.$ This is the optimality, and the minimality means that $K \, = \, \mathrm{span} (\{\g(a)V1\, : \, a  \in  \ca_n\, \}).$ and $K \, = \, \mathrm{span}( \{ \g(a) W \eta\, : \, a \in \ca_n, \, \eta \in \bc^m\,\}).$ 
We define the vector $\Omega_n$ in $\bc^n$  as the vector where all entries equal 1. Then we may obtain a Stinespring representation of $F_X$ in the following way. 
\begin{equation} \label{2ndS}
\forall a \in \ca_n: \quad F_X(a) \, = \,  X \Delta_n(a) (\Omega_n)_|.
\end{equation}
This Stinespring representation is minimal unless span$(\{ \Delta_n(a)X^*\eta\, : \,a \in \ca_n, \, \eta \in \bc^m\,\})$ is not all of $\bc^n.$ So we have a minimal representation if and only if all columns in $X$ are non vanishing. It is clearly no lack of generality to assume that every column in $X$ is non trivial, and with this assumption  fulfilled, the Stinespring representation from (\ref{2ndS}) is minimal.  From \cite{C2} we know that the representations $\g$ and $\Delta_n$ of $\ca_n$ are unitarily equivalent, so we may as well assume that the Hilbert space $K$ from (\ref{1stS}) equals $\bc^n$ and that $\g \, = \, \Delta_n,$ so $$X\Delta_n(a) (\Omega_n)_| \,= \, W^* \Delta_n(a) V .$$ Define the vector $\xi$  in $\bc^n$ as $\xi \, := \, V1,$ then $\|\xi\|_2 \, = \| V\|$ and elementary algebra shows that $X \, = \, W ^*\Delta_n(\xi),$ so the theorem  follows  
\end{proof}

The previous theorem is one-sided and the reason is that a kernel for a linear mapping is usually written to the left of the argument. This calls for the following definition.

\begin{definition}
Let $X$ be a scalar $m \times n $ matrix then $G_X$ is defined as the linear mapping of $\ca_m$ to $M_{(1,n)}(\bc)$ given by $$\forall a \in \ca_m. \quad G_X(a) \, : = \, a_{-} X.$$
\end{definition} 

For a completely bounded mapping $T$ between self-adjoint spaces of operators you may define $T^\#$ as a completely bounded operator between the same spaces and with completely bounded norm $\|T^\#\|_{cb} \, = \, \|T\|_{cb}$ via the equation $T^\#(y)\, : = \, (T(y^*))^*.$  
It is quite easy to see that for an $m \times n$ matrix $X$ we will have $(F_{X^*})^\# \, = \, G_X.$  Based on this we apply Theorem \ref{OpThm} to $X^*$ and we get a factorization 
$X^* \, = \, D^*\Delta_n(\eta)^*$ such that
 $\|D^*\|_\infty\|\eta\|_2 = \|F_{X^*}\|_{cb} \, = \, \|G_X\|_{cb}, $ and we have proven the following theorem.

\begin{theorem}   \label{OpThm*} 
For any $X$ in $M_{(m,n)}(\bc) $ the mapping $G_X: \ca_m \to M_{(1,n)}(\bc)$ satisfies $ \|G_X\|_{cb} \leq k_G^\bc \|G_X\|.$ \newline
There exist a unit vector $\eta $ in $\bc^m$ and a matrix $D$ in $M_{(m,n)}(\bc) $ such that $\|D\|_\infty \,= \, \|G_X\|_{cb} $ and $X = \Delta_m(\eta)D.$ 

A  norm optimal Stinespring representation may be obtained as \newline  $G_X(A) \, = \, \eta_{-}\Delta_m(a) D.$

\end{theorem}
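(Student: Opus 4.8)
The plan is to reduce Theorem \ref{OpThm*} to Theorem \ref{OpThm} applied to the adjoint matrix $X^{*}$, transporting the conclusion back to $G_X$ by means of the $\#$-operation, exactly as indicated just before the statement.

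First I would verify the identity $(F_{X^{*}})^{\#}=G_X$. Viewing $F_{X^{*}}\colon\ca_m\to M_{(n,1)}(\bc)\subset M_n(\bc)$ as a map between self-adjoint operator spaces, one has $F_{X^{*}}(a)=X^{*}a_{|}$ for $a\in\ca_m$, hence $(F_{X^{*}})^{\#}(a)=\big(F_{X^{*}}(a^{*})\big)^{*}=\big(X^{*}(a^{*})_{|}\big)^{*}=\big((a^{*})_{|}\big)^{*}X=a_{-}X=G_X(a)$, since $\big((a^{*})_{|}\big)^{*}$ is the row matrix with entries $a(i)$. Because the $\#$-operation is a complete isometry that also preserves the underlying operator norm, this gives $\|G_X\|=\|F_{X^{*}}\|$ and $\|G_X\|_{cb}=\|F_{X^{*}}\|_{cb}$; combined with the inequality of Theorem \ref{OpThm} for $X^{*}$ it yields $\|G_X\|_{cb}\le k_G^{\bc}\|G_X\|$, which is the first assertion.

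Next I would apply Theorem \ref{OpThm} to $X^{*}\in M_{(n,m)}(\bc)$, obtaining a unit vector $\zeta\in\bc^{m}$ and a matrix $C\in M_{(n,m)}(\bc)$ with $\|C\|_{\infty}=\|F_{X^{*}}\|_{cb}$, $X^{*}=C\,\Delta_m(\zeta)$, and the optimal Stinespring representation $F_{X^{*}}(a)=C\,\Delta_m(a)\,\zeta_{|}$. Setting $D:=C^{*}\in M_{(m,n)}(\bc)$ and $\eta:=\overline{\zeta}\in\bc^{m}$ (a unit vector), and taking adjoints in $X^{*}=C\,\Delta_m(\zeta)$, I get $X=\Delta_m(\zeta)^{*}C^{*}=\Delta_m(\eta)\,D$ with $\|D\|_{\infty}=\|C\|_{\infty}=\|F_{X^{*}}\|_{cb}=\|G_X\|_{cb}$, the claimed factorization. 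Finally, applying $\#$ to the Stinespring representation above, $G_X(a)=\big(C\,\Delta_m(a^{*})\,\zeta_{|}\big)^{*}=(\zeta_{|})^{*}\,\Delta_m(a^{*})^{*}\,C^{*}=\eta_{-}\,\Delta_m(a)\,D$, using $(\zeta_{|})^{*}=\eta_{-}$ and $\Delta_m(a^{*})^{*}=\Delta_m(a)$, which is the asserted norm optimal Stinespring representation.

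I do not expect a genuine obstacle; the whole argument is bookkeeping with adjoints and conjugates. The one place that calls for a small amount of care is the use of the $\#$-operation on the non-self-adjoint spaces $M_{(n,1)}(\bc)$ and $M_{(1,n)}(\bc)$: this is handled by regarding $F_{X^{*}}$ as valued in the self-adjoint algebra $M_n(\bc)$, observing that $(F_{X^{*}})^{\#}$ is then automatically valued in the subspace $M_{(1,n)}(\bc)$, and using that passage to a completely isometrically embedded subspace of the range alters neither the operator norm nor the completely bounded norm.
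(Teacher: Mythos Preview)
Your proposal is correct and follows exactly the paper's own route: the paper's proof is the short paragraph immediately preceding the theorem, which reduces everything to Theorem~\ref{OpThm} applied to $X^{*}$ via the identity $(F_{X^{*}})^{\#}=G_X$, and you have simply carried out this reduction in full detail, including the adjoint bookkeeping for the factorization and for the Stinespring representation. Your remark about handling the $\#$-operation by passing through the ambient self-adjoint algebra $M_n(\bc)$ is the right way to make precise what the paper leaves implicit.
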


We will now focus on the complex $m \times n $ matrix as a kernel for a bilinear operator $B_X$  on $\ca_m \times \ca_n.$ The result we present and it's proof are similar to the ones we just presented.

\begin{theorem}   \label{BilThm}
For any $X$ in $M_{(m,n)}(\bc): \quad \|B_X\|_{cb} \leq K_G^\bc \|B_X\|.$ \newline
There exist a unit vector $\xi$ in $\bc^n,$ a unit vector $\eta$ in $\bc^m$ and a matrix  $C$ in $M_{(m,n)}(\bc) $ such that $\|C\|_\infty \,= \, \|B_X\|_{cb} $ and $X = \Delta_m(\eta)^* C \Delta_n(\xi).$ 

A norm optimal Stinespring representation may be obtained as \newline $B_X(A,B)\, =\, (\eta_|)^* \Delta_m(A) C \Delta_n(B )\xi_|.$
\end{theorem}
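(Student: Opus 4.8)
The plan is to follow the proof of Theorem~\ref{OpThm} almost verbatim, with the little Grothendieck inequality replaced by the Grothendieck inequality (\ref{GI}) and the linear Stinespring machinery replaced by its bilinear analogue from \cite{CS}, \cite{PS}. For the norm estimate, apply (\ref{GI}) to obtain unit vectors $\xi \in \bc^n$, $\eta \in \bc^m$ and a matrix $T \in M_{(m,n)}(\bc)$ with $\|T\|_\infty \le K_G^\bc\|B_X\|$ and $X = \Delta_m(\eta)^* T \Delta_n(\xi)$. Writing out the entries one checks directly that
\[
B_X(a,b) \;=\; \sum_{i=1}^m\sum_{j=1}^n X_{(i,j)}\, a(i)\, b(j) \;=\; (\eta_|)^*\, \Delta_m(a)\, T\, \Delta_n(b)\, \xi_| \qquad (a \in \ca_m,\ b \in \ca_n),
\]
which is a bilinear Stinespring representation of $B_X$ with the representations $\Delta_m$ of $\ca_m$ and $\Delta_n$ of $\ca_n$ and outer factors $(\eta_|)^*$, $T$, $\xi_|$. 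Since the completely bounded norm of a bilinear map presented in this factored form is at most the product of the norms of its three factors (the easy half of the bilinear Stinespring theorem, \cite{CS}), and $\|\eta_|\|_\infty = \|\eta\|_2 = 1 = \|\xi\|_2 = \|\xi_|\|_\infty$, this yields $\|B_X\|_{cb} \le \|T\|_\infty \le K_G^\bc\|B_X\|$, which is the first assertion.

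For the factorization I would play two minimal representations off against each other. On the one hand, \cite{CS}, \cite{PS} provide a norm optimal minimal bilinear Stinespring representation
\[
B_X(a,b) \;=\; W^*\, \pi_m(a)\, U\, \pi_n(b)\, V \qquad (a \in \ca_m,\ b \in \ca_n),
\]
where $\pi_m$ is a representation of $\ca_m$ on a Hilbert space $K_1$, $\pi_n$ a representation of $\ca_n$ on $K_2$, $W \in B(\bc, K_1)$, $U \in B(K_2, K_1)$, $V \in B(\bc, K_2)$, with $\|W\|\|U\|\|V\| = \|B_X\|_{cb}$ and with the two-sided minimality conditions on $(\pi_m, K_1)$ and $(\pi_n, K_2)$. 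On the other hand, letting $\Omega_m$ and $\Omega_n$ be the all-ones vectors, the identity
\[
B_X(a,b) \;=\; (\Omega_m)_|^*\, \Delta_m(a)\, X\, \Delta_n(b)\, (\Omega_n)_|
\]
is itself a bilinear Stinespring representation, with representations $\Delta_m$, $\Delta_n$ and middle factor $U = X$. As in Theorem~\ref{OpThm} there is no loss of generality in assuming that $X$ has no vanishing row and no vanishing column; under this assumption the cyclic subspaces entering the minimality conditions exhaust $\bc^m$ and $\bc^n$, so this representation is minimal. By the uniqueness theorem for minimal Stinespring representations of completely bounded bilinear maps \cite{C2}, the pairs $(\pi_m, \pi_n)$ and $(\Delta_m, \Delta_n)$ are unitarily equivalent; conjugating by the intertwining unitaries we may assume $K_1 = \bc^m$, $\pi_m = \Delta_m$, $K_2 = \bc^n$, $\pi_n = \Delta_n$, so that
\[
(\Omega_m)_|^*\, \Delta_m(a)\, X\, \Delta_n(b)\, (\Omega_n)_| \;=\; W^*\, \Delta_m(a)\, U\, \Delta_n(b)\, V \qquad (a \in \ca_m,\ b \in \ca_n).
\]
Put $\eta := W1 \in \bc^m$, $\xi := V1 \in \bc^n$ and $C := U \in M_{(m,n)}(\bc)$. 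Evaluating the displayed identity at the coordinate indicator functions $a = e_i$ and $b = e_j$ gives $X_{(i,j)} = \overline{\eta_i}\, C_{(i,j)}\, \xi_j$, i.e. $X = \Delta_m(\eta)^* C \Delta_n(\xi)$, while $\|\eta\|_2 = \|W\|$, $\|\xi\|_2 = \|V\|$ and $\|C\|_\infty = \|U\|$, so that $\|\eta\|_2\|C\|_\infty\|\xi\|_2 = \|B_X\|_{cb}$. Rescaling $\eta$ and $\xi$ to unit vectors and absorbing the two positive scalars into $C$ changes neither $X$ nor the value of $B_X$ but arranges $\|C\|_\infty = \|B_X\|_{cb}$; substituting the normalized data back into the identity above then exhibits $B_X(a,b) = (\eta_|)^*\Delta_m(a)\, C\, \Delta_n(b)\, \xi_|$ as the claimed norm optimal Stinespring representation.

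The step I expect to require the most care is the bilinear minimality bookkeeping. I need the precise form of the minimality conditions attached to a bilinear Stinespring representation — they now constrain both the $\pi_m$-cyclicity of $K_1$ and the $\pi_n$-cyclicity of $K_2$, and they involve the middle operator $U$, not just the outer ones — and I must verify that the concrete representation built from $\Delta_m$, $\Delta_n$ and $X$ meets all of them. This is precisely where the reduction to $X$ having no zero row and no zero column enters, exactly as the ``non-vanishing columns'' hypothesis entered the proof of Theorem~\ref{OpThm}; without minimality of this concrete representation the uniqueness theorem \cite{C2} could not be applied. The remaining ingredients — the easy direction of the completely bounded estimate, the entrywise computation $X_{(i,j)} = \overline{\eta_i} C_{(i,j)}\xi_j$, and the final rescaling — are routine.
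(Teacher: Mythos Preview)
Your proposal is correct and follows essentially the same route as the paper: Grothendieck's inequality for the bound $\|B_X\|_{cb}\le K_G^\bc\|B_X\|$, then the comparison of a norm optimal minimal bilinear Stinespring representation with the ``obvious'' one $B_X(a,b)=(\Omega_m)_|^*\Delta_m(a)X\Delta_n(b)(\Omega_n)_|$, reduction to the case of no vanishing rows or columns to secure minimality, and the uniqueness theorem from \cite{C2} to identify the representations with $\Delta_m$, $\Delta_n$. The only cosmetic difference is that the paper extracts $\eta$ and $\xi$ via item~(v) of Theorem~3.2 in \cite{C2} (the commutant of $\Delta_n(\ca_n)$ is $\Delta_n(\ca_n)$, so the intertwiner is $\Delta_n(\xi)$), whereas you evaluate directly at the indicators $e_i,e_j$; both yield $X_{(i,j)}=\overline{\eta_i}C_{(i,j)}\xi_j$.
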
  

\begin{proof}
It follows from the classical Grothendieck inequality that there exist unit vectors $\mu$ in $\bc^m,$  $\nu$ in $\bc^n$ and a matrix $D$ in $M_{(m,n)}(\bc) $ such that $\|D\|_\infty \, \leq \, K_G^\bc\|B_X\|$ and $X \, = \, \Delta_m(\mu)^*D\Delta_n(\nu).$ Elementary manipulations show that 
\begin{align} \label{St3}
\forall y \in \ca_m \, \forall z \in \ca_n  : \, B_X(y,z) \, & = \, \sum_{i = 1}^m \sum_{j = 1}^n (y_i\bar\mu_i) D_{(i,j)} (\nu_j z_j) \\ \notag
& = \, (\mu_|)^* \Delta_m(y) D\Delta_n (z) \nu_| .
\end{align} 
So we have obtained a Stinespring representation of $B_X$ which shows that $\|B_X\|_{cb} \, \leq \, \|\mu\|_2\|D\|_\infty\|\nu\|_2 \, \leq \, K_G^\bc\|B_X\|.$ We will take a norm optimal and minimal Stinespring representation of $B_X$ and use the following notation. There exist Hilbert spaces $K, L$ representations $\pi$ of $\ca_m$ on $K,$ $\r$ of $\ca_n$ on $L,$ and operators $R $ in $B(K ,\bc),$ $ S $ in $ B(L,K),$ $T$ in $ B(\bc, L)$ such that for any $ y$ in $\ca_m$ for any  $z $  in $\ca_n$ 
\begin{equation}  \label{OptBilSt}
B_X(y,z) \, = \, R\pi(y)S\r(z) T, \text{ and } \|R\|\|S\|\|T\| \, = \, \|B_X\|_{cb}. 
\end{equation}   
As above, we let $\Omega_m$ denote the vector in $\bc^m$   where all the entries are 1, and in analogy with  (\ref{2ndS}) we define a second Stinespring representation of $B_X$ by 
\begin{equation} \label{2ndBilSt} 
B_X(y,z) \, = \, ((\Omega_m)_|)^*\Delta_m(y) X \Delta_n(z) (\Omega_n)_|.
\end{equation}
This second Stinespring representation is minimal if the following 4 conditions are satisfied 
\begin{itemize}
\item[(i)] span$(\{\Delta_n(z)\Omega_n \, : \, z \in \ca_n\,\}) \, = \, \bc^n.$ 
\item[(ii)] span$(\{\Delta_m(y) X\Delta_n(z)\Omega_n\,:\, y \in \ca_m, \, z \in \ca_n\, \} \, = \, \bc^m.$ 
\item[(iii)] span$(\{ \Delta_m(y^*) \Omega_m \, : \, y \in \ca_m  \,\}) \, = \, \bc^m.$
\item[(iv)] span$(\{ \Delta_n(z^*) X^*\Delta_m(y^*) \Omega_m\, : \, y  \in \ca_m,\, z \in \ca_n\, \} \, = \, \bc^n.$
\end{itemize} 
Since all the entries in the vectors $\Omega_m$ and $\Omega_n$ are 1, it follows that the conditions (i) and (iii) are fulfilled. The condition (ii) is fulfilled if all the rows in $X$  are non vanishing,  and, analogously,  (iv) is satisfied if all the columns in $X$ are non trivial.  Since it will be no serious restriction to assume that all the columns and all the rows in $X$ are non vanishing, we will assume so, and (\ref{2ndBilSt}) gives a minimal Stinespring representation of $B_X.$ We may then apply item (ii) of Theorem 3.2 in \cite{C2} and then assume that in the Stinespring  representation (\ref{OptBilSt}) we have $K \,= \, \bc^m,$ $L \, = \, \bc^n,$ $\pi \, = \, \Delta_m,$ and $\r \, = \, \Delta_n.$ Then we remark that the commutant of $\Delta_m(\ca_m)$ equals $\Delta_m(\ca_m)$ and similarly for $\ca_n,$ 
 so when we apply item (v) of the same theorem we find that there exists a vector $\xi$ in $\bc^n$ such that $\Delta_n(\xi)(\Omega_n)_| = T.$ Then $\xi = T1,$ where $1$ is a unit vector in $\bc,$ so $\|\xi\|_2 = \|T\|. $
We also get that there exists a vector $\eta$ in $\bc^m$ such that $\Delta_m(\eta)(\Omega_m)_| \, = \, R^*.$ Then $\eta \, = \, R^*1$ and $\|\eta\|_2 \, = \, \|R\|.$ Some elementary algebra shows that $X \, = \Delta_m(\eta)^* S \Delta_n(\xi) $ and   from (\ref{OptBilSt}) it follows that $\|\eta\|_2\|S\|\|\xi\|_2 \, = \, \|B_X\|_{cb},$ and the theorem follows.
\end{proof}

We will now turn to the study of the norm of a Schur multiplier $S_X$ for a complex $m \times n $ matrix $X,$  and the factorization result we get in this connection. 
  The search for estimates of the Schur multiplier norm $\|S_X\|$ has a long history, and we do not intend to cover all the contributions. On the other hand several works give estimates based on  norms of the diagonals
  of some  positive matrices. The most famous estimate is of course the very first one by Schur \cite{Sc}, although he did not see it this way, but his result    tells that for a positive matrix $X$ the Schur multiplication  is a positive mapping and hence it's norm equals the norm of the diagonal. 
It seems to us that the  usage of the words {\em  row and column norms} in connection with norms of Schur multipliers  appears first in Davidson and Donsig's article \cite{DD}. The previous researchers expressed their estimates in terms of norms of diagonals of some positive matrices, but this is really  the same thing, as we  saw in the introduction.   In the article \cite{C1} we constructed a concrete Stinespring representation of the Schur product, which showed  that Schur multiplication is a completely bounded bilinear operator of completely bounded norm 1.  Here we will just reformulate a single result from \cite{C1}, which shows that the factorization of Theorem \ref{LR2} is optimal.
  
\begin{proposition}  \label{Opti} 
Let $l, m, n $ be natural numbers, $L$ and $R$ be  matrices in $M_{(l,m)}(\bc)$ and $M_{(l,n)}(\bc)$ then $\|S_{(L^*R)}\| \leq \|L\|_c\|R\|_c. $
\end{proposition}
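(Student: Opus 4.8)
The plan is to exhibit $S_{L^*R}(A)$, for an arbitrary $A\in M_{(m,n)}(\bc)$, as a product of three operators whose norms are controlled by $\|L\|_c$, $\|A\|_\infty$ and $\|R\|_c$ respectively, and then to take the supremum over $A$ in the unit ball of $(M_{(m,n)}(\bc),\|\cdot\|_\infty)$.

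First I would write $X:=L^*R$ entrywise. Denoting by $L_k\in\bc^m$ and $R_k\in\bc^n$ the $k$-th rows of $L$ and $R$, one has $X_{(i,j)}=\sum_{k=1}^l\overline{L_{(k,i)}}\,R_{(k,j)}$, so that
\[
S_X(A)\;=\;\sum_{k=1}^l\Delta_m(\bar L_k)\,A\,\Delta_n(R_k).
\]
This sum is precisely the matrix product $PQT$, where $P=\big(\Delta_m(\bar L_1)\ \cdots\ \Delta_m(\bar L_l)\big)\in M_{(m,lm)}(\bc)$ is the row of the $l$ diagonal blocks, $Q\in M_{(lm,ln)}(\bc)$ is the block-diagonal matrix with $l$ copies of $A$ on the diagonal (that is, $I_l\otimes A$), and $T\in M_{(ln,n)}(\bc)$ is the column of the $l$ diagonal blocks $\Delta_n(R_k)$.

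Next I would estimate the three factors. Clearly $\|Q\|_\infty=\|A\|_\infty$. Since $\Delta_m(\bar L_k)^*=\Delta_m(L_k)$, the matrix $PP^*=\sum_{k=1}^l\Delta_m(\bar L_k)\Delta_m(L_k)$ is the diagonal matrix whose $i$-th entry is $\sum_{k=1}^l|L_{(k,i)}|^2=(L^*L)_{(i,i)}$; hence $\|P\|_\infty^2=\|PP^*\|_\infty=\|\mathrm{diag}(L^*L)\|=\|L\|_c^2$. Symmetrically $T^*T=\sum_{k=1}^l\Delta_n(\bar R_k)\Delta_n(R_k)$ is the diagonal matrix with $j$-th entry $\sum_{k=1}^l|R_{(k,j)}|^2=(R^*R)_{(j,j)}$, so $\|T\|_\infty=\|R\|_c$. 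Submultiplicativity of the operator norm then gives $\|S_X(A)\|_\infty=\|PQT\|_\infty\le\|P\|_\infty\|Q\|_\infty\|T\|_\infty=\|L\|_c\,\|A\|_\infty\,\|R\|_c$, and taking the supremum over $\|A\|_\infty\le 1$ yields $\|S_{L^*R}\|\le\|L\|_c\|R\|_c$.

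I do not expect a genuine obstacle here: the only delicate points are the index bookkeeping in passing from $X=L^*R$ to the sum $\sum_k\Delta_m(\bar L_k)(\cdot)\Delta_n(R_k)$, the identity $\Delta_m(\bar L_k)^*=\Delta_m(L_k)$, and the observation that the diagonals of $PP^*$ and $T^*T$ collect exactly the squared column norms of $L$ and of $R$. Alternatively, one could invoke the classical estimate that $X_{(i,j)}=\langle\xi_j,\eta_i\rangle$ forces $\|S_X\|\le(\sup_j\|\xi_j\|_2)(\sup_i\|\eta_i\|_2)$ (cf.\ \cite{Pa}, Theorem 8.7), applied with $\xi_j$ the $j$-th column of $R$ and $\eta_i$ the $i$-th column of $L$; but the three-factor factorization above is self-contained and stays within the elementary matrix manipulations used elsewhere in this section.
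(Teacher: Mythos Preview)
Your argument is correct. Both proofs rest on the same three–factor decomposition
\[
S_{L^*R}(A)\;=\;(\text{outer factor depending on }L)\cdot(\text{amplification of }A)\cdot(\text{outer factor depending on }R),
\]
and then estimate the outer factors by $\|L\|_c$ and $\|R\|_c$. The paper realizes this via the Stinespring representation of the Schur product from \cite{C1}: it embeds everything into $M_k(\bc)$ with $k=\max\{l,m,n\}$, uses the commuting representations $\lambda,\rho$ on $\bc^k\otimes\bc^k$ and the isometry $V$ to write $S_{L^*R}(A)=\big(V^*\lambda(L^*)\big)\rho(A)\big(\lambda(R)V\big)$, and then invokes Lemma~2.6 of \cite{C1} for the identities $\|V^*\lambda(L^*)\|=\|L\|_c$ and $\|\lambda(R)V\|=\|R\|_c$. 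Your block–matrix factorization $S_X(A)=PQT$ with $Q=I_l\otimes A$ is an explicit coordinate version of the same construction (indeed, unpacking $\lambda,\rho,V$ from \cite{C1} produces operators isomorphic to your $P$, $Q$, $T$). What you gain is self-containment: you avoid the external reference and the passage through $M_k(\bc)$, and your computation of $PP^*$ and $T^*T$ is exactly the content of Lemma~2.6 of \cite{C1} in this special case. What the paper's version buys is coherence with the rest of the section, where the same $\lambda,\rho,V$ reappear in the norm-optimal Stinespring representations of Theorems~\ref{LR2} and~\ref{Tx}.
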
  
\begin{proof}Let $k \, = \, \max\{l, m, n\} $ and consider the matrices $L$ and $R$ to be matrices in $M_k(\bc),$ and let $A$ be a matrix in $M_k(\bc),$ then we will apply  Theorem 2.3 of \cite{C1}. That result deals with the Schur block product, but it applies of course to the ordinary Schur product as well. With the notation from \cite{C1} the two representations $\l$ and $\r$ of $M_k(\bc)$ on $\bc^k \otimes \bc^k$ do commute, so we have 
\begin{align*}
\|S_{(L^*R)}(A)\| \,& = \, \|V^*\l(L^*R)\r(A)V\| \\ 
& = \, \|V^*\l(L^*)\r(A)\l(R)V\| \\ 
& \leq  \, \|V^*\l(L^*)\|\|\r(A)\|\| \l(R)V\| \text{ by Lemma 2.6 of \cite{C1}}\\ 
& =  \, \|L\|_c\|\r(A)\|\|R\|_c, \\ 
\end{align*}
 and the proposition follows. 
\end{proof}
This result gives simple proofs to some of the previous results on norms of Schur multipliers.  C. Davis \cite{Da} studies the  multiplier norm $\|S_X\| $ when $X$ is a self-adjoint matrix and his upper  estimate on the multiplier norm is the norm of the diagonal of $|X|$ which is exactly $\| (|X|)^{\frac{1}{2}}\|_c^2.$ When $X$ is self-adjoint with polar decomposition $X \, =\, S|X|, $ then $X$ may be factored as $X = (|X|^\frac{1}{2})(S|X|^\frac{1}{2}),$ and Davis result follows as an application of Proposition \ref{Opti} to this factorization.  
 M. Walter gets in \cite{Wa} an upper bound for a general $X$ expressed in terms of  the norms of some diagonals. In our language Walter's result is $\|S_X\|_{cb} \leq \| (XX^*)^\frac{1}{2} \|_c \| (X^*X)^\frac{1}{2} \|_c.$ In our setting this corresponds to the factorization of an $X $ with polar decomposition $X = V|X|,$ as $X = \big((XX^*)^\frac{1}{2}\big)\big( V(X^*X)^\frac{1}{2}\big).$ Bo{\`z}ejko gives in \cite{Bo} a very short proof of Walter's result which is related to the one we use. The difference lies mostly in his use of the existing theory of Banach spaces.
  The article \cite{DD} by Davidson and Donsig contains a lot of information on problems related to our investigation, but we have not seriously tried to apply our methods to questions on which subsets  $S$ of  the Cartesian product $\bz \times \bz$  that will have the property that Schur multiplication with the characteristic function of $S$ will induce a completely bounded Schur multiplier.
From \cite{BCD} we know that when $J=\bn$ then the restriction of a bounded matrix to it's lower diagonal part is not a bounded mapping, but we have found  no way to show that this matrix can not be factored as $L^*R$ with $\|L\|_c < \infty$ and $\|R\|_c < \infty.$   We have got the idea that the remarkable results by Lust-Piquard \cite{LP} ought to be able to help us to understand some of the questions on Schur multipliers, which we have been interested in. Unfortunately we were not able do so. In \cite{Li} Livshits studies the Schur block product between block matrices with operator entries, and he obtains an inequality which in our setting is a consequence of the complete boundedness of the Schur block product.

The inclusion of $M_{(m, n)}(\bc)$ in $M_k(\bc)$ for a $k $ bigger than both $m $ and $n$ may be done in any fashion where we fix $m $ rows and $n$ columns in $M_k(\bc). $ Such an  inclusion is  obviously an  isometry with respect to the operator norm and there is  a conditional expectation of norm 1, with respect to the operator norms,  from  $M_k(\bc)$ onto the embedded copy of $M_{(m,n)}(\bc) $ which is given by multiplication by orthogonal projections from the left and from the right. Consequently, if we look at a complex $m \times n$ matrix $X,$  the Schur multiplier norm $\|S_X\| $ is the same on  both  $M_{(m,n)}(\bc)$ and  $M_k(\bc).$  Having this in mind we will, sometimes,  work in the setting of square matrices until we have obtained our result in this setting and then deduce the general result at the very end.

The basic result we use in the proof of the following theorem, in replacement of the inequalities by Grothendieck used in the proofs of the theorems \ref{OpThm} and \ref{BilThm}, is Smith's result \cite{Sm} that the completely bounded norm $\|S_X\|_{cb} $ equals the operator norm $\|S_X\|.$

 We mentioned above that Grothendieck's work \cite{Gr} yields a description of of the structure of a Schur multiplier of norm one, which except for a constant is analogous to the optimal on one which may be obtained via the use of the theory of completely bounded mappings as described in Pisier's book \cite{Pi1} and  Paulsen's book \cite{Pa}. All of this gives the following theorem except that the following theorem  has the form of a factorization result, which includes a statement on the ranks of the matrices involved. The statement on ranks is a consequence of the following lemma.

\begin{lemma} \label{ReducTl} 
Let $k, m,n,r  $ be natural numbers  $A$ an $m \times k$ complex  matrix,  $B$ a $k \times n $ complex  matrix, and $r$ the rank of the product $AB. $  Then there exist an $m \times r$  complex  matrix $L$ and an $r \times n$ complex  matrix $R$  such that $AB\, = \, LR, \,\|L\|_\infty\,\leq \, \|A\|_\infty,$ $\|R\|_\infty\, \leq \, \|B\|_\infty,$  $\|L\|_2\, \leq \, \|A\|_2, \, \|R\|_2 \, \leq \, \|B\|_2,   \|L\|_r\,\leq \, \|A\|_r$   and $\|R\|_c\,  \leq \, \|B\|_c$
\end{lemma}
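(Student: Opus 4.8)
The plan is to produce $L$ and $R$ as compressions of $A$ and $B$ to a carefully chosen $r$-dimensional subspace of $\bc^k$. If $r=0$ then $AB=0$ and there is nothing to prove, so assume $r\geq 1$.

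First I would locate the subspace. Put $\mathcal{V}:=\mathrm{range}(B)\subseteq\bc^k$ and $\mathcal{N}:=\ker(A)\cap\mathcal{V}$, and let $\mathcal{W}$ be the orthogonal complement of $\mathcal{N}$ inside $\mathcal{V}$, so that $\mathcal{V}=\mathcal{W}\oplus\mathcal{N}$ is an orthogonal decomposition. Rank--nullity applied to the restriction $A|_{\mathcal{V}}$ gives $\dim\mathcal{W}=\dim\mathcal{V}-\dim\mathcal{N}=\dim A(\mathcal{V})=\dim\,\mathrm{range}(AB)=r$. Writing $P$ for the orthogonal projection of $\bc^k$ onto $\mathcal{W}$, the crucial point is that $APB=AB$: for $x\in\bc^n$ write $Bx=w+\nu$ with $w\in\mathcal{W}$ and $\nu\in\mathcal{N}$ (the orthogonal decomposition in $\mathcal{V}$); since $\mathcal{N}\perp\mathcal{W}$ we get $PBx=w$, hence $APBx=Aw=Aw+A\nu=ABx$, because $\mathcal{N}\subseteq\ker A$.

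Next I would factor $P=UU^{*}$ with $U\in M_{(k,r)}(\bc)$ an isometry onto $\mathcal{W}$ (i.e. $U^{*}U=I_{r}$, $UU^{*}=P$), and set $L:=AU\in M_{(m,r)}(\bc)$ and $R:=U^{*}B\in M_{(r,n)}(\bc)$. Then $LR=A(UU^{*})B=APB=AB$ by the previous paragraph. For the norm bounds I would use $\|U\|_{\infty}=\|U^{*}\|_{\infty}=1$ together with the submultiplicativity relations $\|XY\|_{\infty}\leq\|X\|_{\infty}\|Y\|_{\infty}$, $\|XY\|_{2}\leq\|X\|_{2}\|Y\|_{\infty}$ and $\|XY\|_{2}\leq\|X\|_{\infty}\|Y\|_{2}$, which give $\|L\|_{\infty}\leq\|A\|_{\infty}$, $\|R\|_{\infty}\leq\|B\|_{\infty}$, $\|L\|_{2}\leq\|A\|_{2}$ and $\|R\|_{2}\leq\|B\|_{2}$ at once. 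For the row and column norms I would exploit that $0\leq P\leq I$, whence $LL^{*}=APA^{*}=AA^{*}-A(I-P)A^{*}\leq AA^{*}$ and $R^{*}R=B^{*}PB\leq B^{*}B$ as positive semidefinite matrices; comparing diagonal entries then yields $\|L\|_{r}^{2}=\|\mathrm{diag}(LL^{*})\|\leq\|\mathrm{diag}(AA^{*})\|=\|A\|_{r}^{2}$ and likewise $\|R\|_{c}\leq\|B\|_{c}$. (Note that $L$ and $R$ automatically have rank $r$, since $r=\mathrm{rank}(AB)=\mathrm{rank}(LR)\leq\min\{\mathrm{rank}(L),\mathrm{rank}(R)\}$ while each has only $r$ columns, respectively $r$ rows; this is not needed here but is where the rank statement in the Schur factorization theorem comes from.)

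There is no deep obstacle in the argument; the only delicate point is choosing $\mathcal{W}$ so that simultaneously $\dim\mathcal{W}=r$ \emph{and} the identity $APB=AB$ holds exactly. This is precisely why $\mathcal{W}$ must be taken inside $\mathrm{range}(B)$ as the orthocomplement of $\ker(A)\cap\mathrm{range}(B)$, rather than, for instance, by a singular value truncation of $AB$ itself; after that choice, everything reduces to the standard operator-norm and Hilbert--Schmidt inequalities and the operator inequality $P\leq I$.
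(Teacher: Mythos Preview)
Your proof is correct and coincides with the paper's: your subspace $\mathcal{W}$ is exactly the range of the paper's projection $F$ (the support projection of $AE$, where $E$ is the range projection of $B$), and your isometry $U$ is the paper's $W$, so $L=AU$ and $R=U^{*}B$ are the same factors. The only difference is presentation---you build $\mathcal{W}$ explicitly via rank--nullity and spell out the norm verifications, whereas the paper phrases everything in terms of range/support projections and leaves the norm inequalities to the reader.
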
 
\begin{proof}
Let $E$ denote the range projection of $B$ and $F$ the support projection of $AE,$ then the rank of $AE $ is the same as the rank of $AB,$ so the rank of $F$ is $r$ and since $F \leq E$  
we have that the rank of $FB $ also is $r.$  Since $r \leq k$ there exists  an isometry $W$ in $M_{(k,r)}(\bc)$ such that $W^*W \, = \, I_{\bc^r},$  $WW^* \, = \, F,$ and then we can define $L\, : = \, AW$ and $R\, := \, W^*B$ with the desired properties. 
\end{proof}

\begin{theorem} \label{LR2} Let $X$ be in $M_{(m,n)}(\bc)$ with  rank  $r,$  then there exist matrices $L$  in $M_{(r\times m)}(\bc)$  and $R$ in $M_{(r,n)}(\bc) $ such that they both have rank $r,\, $   $$  L^*R\,  = \, X \text{ and } \|L \|_c \|R\|_c \, = \, \|S_X\|_{cb} = \|S_X\|.$$

Let $k = \max\{m,n\}$ and  consider $X, L $ and $R$ to be  in $M_k(\bc),$ with the same numbering.   The operator $S_X $ on $M_k(\bc)$  will have a norm  optimal Stinespring representation given  by  $$ S_X(A)\, = \,\big( V^*\l(L^*)\big)\r(A)\big(\l(R)V\big).$$ \end{theorem}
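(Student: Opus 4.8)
\emph{Proof strategy.} The plan is to obtain the factorization in three moves: first produce \emph{some} factorization of $X$ through a Hilbert space with the correct column bounds; then compress it down to rank $r$ by means of Lemma~\ref{ReducTl}, keeping those bounds; and finally feed the result into the Stinespring picture of the Schur product from \cite{C1}. If $X=0$ everything is trivial, so assume $X\neq 0$.

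For the first move I would invoke the completely bounded description of Schur multipliers recalled just before Lemma~\ref{ReducTl} (\cite{Pa}, Theorem~8.7(iii)): there are vectors $\xi_j,\eta_i$ in the unit ball of a Hilbert space, which we may take finite dimensional, say $\bc^N$, with $X_{(i,j)}=\|S_X\|\langle\xi_j,\eta_i\rangle$. Assembling the $\|S_X\|^{1/2}\eta_i$ as the columns of a matrix $L_0\in M_{(N,m)}(\bc)$ and the $\|S_X\|^{1/2}\xi_j$ as the columns of $R_0\in M_{(N,n)}(\bc)$ gives $X=L_0^*R_0$ with $\|L_0\|_c\le\|S_X\|^{1/2}$ and $\|R_0\|_c\le\|S_X\|^{1/2}$. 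Now apply Lemma~\ref{ReducTl} with $A:=L_0^*$ and $B:=R_0$, so that $AB=X$ has rank $r$; this yields $\wt L\in M_{(m,r)}(\bc)$ and $\wt R\in M_{(r,n)}(\bc)$ with $X=\wt L\,\wt R$, $\|\wt L\|_r\le\|A\|_r=\|L_0\|_c\le\|S_X\|^{1/2}$ and $\|\wt R\|_c\le\|B\|_c=\|R_0\|_c\le\|S_X\|^{1/2}$. Put $L:=\wt L^*\in M_{(r,m)}(\bc)$ and $R:=\wt R\in M_{(r,n)}(\bc)$. Then $L^*R=X$, $\|L\|_c=\|\wt L\|_r\le\|S_X\|^{1/2}$, $\|R\|_c\le\|S_X\|^{1/2}$, and since $r=\mathrm{rank}(L^*R)\le\min\{\mathrm{rank}(L^*),\mathrm{rank}(R)\}$ while $L^*$ (with $r$ columns) and $R$ (with $r$ rows) each have rank at most $r$, both have rank exactly $r$. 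Combining $\|L\|_c\|R\|_c\le\|S_X\|$ with Proposition~\ref{Opti}, namely $\|S_X\|=\|S_{L^*R}\|\le\|L\|_c\|R\|_c$, forces $\|L\|_c\|R\|_c=\|S_X\|$, and $\|S_X\|=\|S_X\|_{cb}$ by Smith's theorem \cite{Sm}.

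For the Stinespring statement I would pass to $M_k(\bc)$ with $k=\max\{m,n\}$ (the Schur multiplier norm is the same there, as noted above) and use Theorem~2.3 of \cite{C1}, which represents the Schur multiplier as $S_X(A)=V^*\l(X)\r(A)V$ with $\l$ and $\r$ the two \emph{commuting} representations of $M_k(\bc)$ on $\bc^k\otimes\bc^k$ occurring there. Substituting $\l(X)=\l(L^*R)=\l(L^*)\l(R)$ and moving $\l(R)$ past $\r(A)$ by commutativity gives
\[
S_X(A)=V^*\l(L^*)\l(R)\r(A)V=V^*\l(L^*)\r(A)\l(R)V=\big(V^*\l(L^*)\big)\r(A)\big(\l(R)V\big),
\]
that is, a Stinespring representation $S_X(A)=W_2^*\r(A)W_1$ of the linear map $S_X$ with $W_1=\l(R)V$ and $W_2=\l(L)V$. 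Exactly as in the proof of Proposition~\ref{Opti} (Lemma~2.6 of \cite{C1}), $\|W_1\|=\|\l(R)V\|=\|R\|_c$ and $\|W_2\|=\|V^*\l(L^*)\|=\|L\|_c$, so $\|W_1\|\|W_2\|=\|L\|_c\|R\|_c=\|S_X\|_{cb}$; since $\|S_X\|_{cb}$ is the infimum of $\|W_1\|\|W_2\|$ over all such representations (\cite{Pa}), this one is norm optimal.

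The only slightly delicate point is the bookkeeping with adjoints: Lemma~\ref{ReducTl} is phrased for a product $AB=LR$, whereas Theorem~\ref{LR2} wants the form $L^*R$, so one must track that it is the \emph{row} norm of the left factor produced by the lemma that becomes $\|L\|_c$ after transposition, and that the lemma controls exactly that row norm together with the column norm of the right factor. Beyond this, the argument is a straightforward assembly of the cited results, with no real analytic obstacle.
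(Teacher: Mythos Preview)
Your proof is correct and follows essentially the same route as the paper: both invoke the factorization from \cite{Pa} Theorem~8.7(iii) to produce $L_0^*R_0=X$ with column norms bounded by $\|S_X\|^{1/2}$, apply Lemma~\ref{ReducTl} to compress to rank $r$, and then feed $\l(X)=\l(L^*)\l(R)$ into the Stinespring picture of \cite{C1} (Theorem~2.3 and Lemma~2.6) to obtain both the representation and its norm optimality. If anything, your write-up is slightly more careful than the paper's in two places --- you explicitly track that Lemma~\ref{ReducTl} controls the \emph{row} norm of the left factor (which becomes $\|L\|_c$ after taking the adjoint), and you spell out the elementary rank argument forcing $\mathrm{rank}(L)=\mathrm{rank}(R)=r$ --- whereas the paper simply asserts these.
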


\begin{proof} It is tempting to present a proof of this which is based on the results from \cite{C2}, in the same way as we have done a couple of times just above. It is possible to follow this idea, but, unfortunately, the path we found this way is more complicated than the proof below which is an easy application of a well known result.
We return to the result \cite{Pi0} Theorem 5.1 (ii) or  \cite{Pa} Theorem 8.7 (iii) 
so there  exists a Hilbert space $K$ with some vectors $\xi_j, $ $1 \leq j \leq  n$ and $\eta_i, $ $1 \leq i \leq m $ in $K$  such that $\|\xi_j \|_2 \, \leq \, \|S_X\|^\frac{1}{2},  \, \|\eta_i\|_2 \, \leq \, \|S_X\|^\frac{1}{2}   $  and $X_{(i,j)} \, = \, \langle \xi_j, \eta_i\rangle.$   
Let $K_1$ denote  the  subspace of $K$ given as the linear span of the vectors $\{\xi_1, \dots , \xi_n\}\cup \{\eta_1, \dots , \eta_m\}. $  Let $ d$ be the dimension of $K_1 $ and let $\{ e_1, \dots , e_d\} $ be an orthonormal basis  $K_1 ,$ then 
we can build  a  scalar $d\times m$ matrix $L^0 = (L^0_{(t,i)}) $ and a scalar  $d \times n$ matrix $ R^0 = (R^0_{(t,j)})$  by the formulae \begin{align*}  L^0_{(t,i) } &:= \langle \eta_i, e_t\, \rangle \\
 R^0_{(t,j) } &:= \langle  \xi_j, e_t\, \rangle .  
\end{align*}    
 It follows from the  basic theory of Hilbert spaces and the equation $X_{(i,j)} \, = \, \langle \eta_j, \, \xi_i\rangle$   that $(L^0)^*R^0 = X$ and  each column in both $L^0$ and $R^0$ has norm at most $\|S_X\|^{\frac{1}{2}}.$  The Lemma \ref{ReducTl} then shows that there exist matrices $L $ in $M_{(r,m)}(\bc) $ and  $R$ in $M_{(r,n)}(\bc)$ both with rank $r$ such that $ X \, = \, L^*R$ and $\|L\|_c\|R\|_c \leq \|S_X\| = \|S_X\|_{cb}. $ We return to Theorem 2.3 of \cite{C1} and recall that the representations $\l$ and $\r$ commute, so the fact that $\l(X) \, = \, \l(L^*) \l(R)$ shows that we get a Stinespring representation for $S_X$ as claimed in the theorem. The Lemma 2.6 of the same article shows that the completely bounded norm $\|S_X\|_{cb}$ is at most $\|L\|_c\|R\|_c$ and the theorem follows.
 \end{proof}
 
We will now switch to the study of the bilinear operator $T_X.$  

\begin{theorem} \label{Tx} Let $X$ be in $M_{(m,n)}(\bc)$ with  rank  $r,$  then there exist a vector $\g$ in $\bc^m$ and matrices $L $ in $M_{(r,m,)}(\bc),$ $  R $  in $M_{(r,n)}k(\bc) $  such that they both have rank $r,\, $   $$ \Delta_m(\g) L^*R\,  = \, X \text{ and } \|\g\|_2\|L \|_c \|R\|_c \, = \, \|T_X\|\, = \, \|T_X\|_{cb}.$$

 For $k \, := \, \max\{m,n\},$   the operator $T_X $ on $\ca_k \times M_k(\bc)$  will have a norm  optimal Stinespring representation given  by  $$ T_X(a,B)\, = \,\big(\g_{-}\big) \Delta_k(a)\big( V^*\l(L^*)\big)\r(B)\big(\l(R)V\big).$$ \end{theorem}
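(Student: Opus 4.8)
The plan is to follow the template of Theorems~\ref{OpThm} and \ref{BilThm}, now resting on the Schur--multiplier factorization of Theorem~\ref{LR2}. Put $k=\max\{m,n\}$ and regard $X$ inside $M_k(\bc)$; as in the earlier proofs there is no loss in assuming $X$ has no zero row and no zero column, since these do not affect $T_X$ (they amount to pre-composing with a coordinate projection of $\ca_m$, resp.\ post-composing with a coordinate inclusion of $\bc^n$, both complete isometries or $*$-homomorphisms) and can be reinstated at the very end. The starting observation is that $T_X(a,B)=a_{-}S_X(B)$, so that for \emph{any} factorization $X=\Delta_m(\g)L^*R$ with $\g$ having no zero entry we may, using the Stinespring representation $S_{L^*R}(B)=V^*\l(L^*)\r(B)\l(R)V$ of \cite{C1} (with $V\colon\bc^k\to\bc^k\otimes\bc^k$ the diagonal embedding) and the elementary identity $\Delta_k(\z)V^*=V^*\l(\Delta_k(\z))$ for $\z\in\bc^k$, rewrite
\[
T_X(a,B)=\g_{-}\,\Delta_k(a)\,\bigl(V^*\l(L^*)\bigr)\,\r(B)\,\bigl(\l(R)V\bigr).
\]
Since $a\mapsto\Delta_k(a)$ and $B\mapsto\r(B)$ extend to $*$-homomorphisms, this is a Stinespring representation of the cb bilinear map $T_X$, and estimating the two outer factors (a row vector times an operator for $\|T_X\|$, and its amplification together with Proposition~\ref{Opti} and Lemma~2.6 of \cite{C1} for $\|T_X\|_{cb}$) gives $\|T_X\|\le\|T_X\|_{cb}\le\|\g\|_2\,\|L\|_c\,\|R\|_c$. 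Thus both norms are dominated by the infimum of $\|\g\|_2\|L\|_c\|R\|_c$ over all such factorizations; what remains is to produce one that attains it.

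For this I would mimic the proof of Theorem~\ref{BilThm}. Let $T_X(a,B)=R'\pi(a)S'\rho(B)T'$ be a minimal, norm-optimal Stinespring representation of $T_X$, with $\pi$ a representation of $\ca_m$, $\rho$ a representation of $M_k(\bc)$, and $\|R'\|\,\|S'\|\,\|T'\|=\|T_X\|_{cb}$ (this optimality is the content of \cite{CS}, \cite{C2}). Alongside it write the concrete representation
\[
T_X(a,B)=(\Omega_k)_{-}\,\Delta_k(a)\,\bigl(V^*\l(X)\bigr)\,\r(B)\,V ,
\]
and verify that it is minimal by the four span conditions used for (\ref{2ndBilSt}) in the proof of Theorem~\ref{BilThm}: the two involving $(\Omega_k)_{-}$ hold because all entries of $\Omega_k$ equal $1$, and the two involving $V$ because $V$ is the diagonal embedding and $X$ has no zero row or column. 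Then apply the uniqueness of \cite{C2} (items (ii) and (v) of Theorem~3.2 there, exactly as in Theorem~\ref{BilThm}): the two representations are unitarily equivalent, so we may take $\pi=\Delta_k$ and $\rho=\r$; and since $\Delta_k(\ca_k)'=\Delta_k(\ca_k)$ while $\r(M_k(\bc))'=\l(M_k(\bc))$, the outer pieces of the optimal representation differ from those of the concrete one by commutant elements, i.e.\ $R'=\g_{-}$ for some $\g\in\bc^k$ with $\|R'\|=\|\g\|_2$ and $T'=\l(R)V$ for some $R\in M_k(\bc)$ with $\|T'\|=\|R\|_c$. Comparing the two formulas for $T_X$ and using $\Delta_k(\z)V^*=V^*\l(\Delta_k(\z))$ once more, one reads off that $S'$ is implemented as $V^*\l(L^*)$ for a matrix $L$ with $\|S'\|=\|L\|_c$ and that $\Delta_m(\g)L^*R=X$; in particular $\g$ has no zero entry since $X$ has no zero row. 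Hence $\|\g\|_2\|L\|_c\|R\|_c=\|R'\|\|S'\|\|T'\|=\|T_X\|_{cb}$. Finally apply Lemma~\ref{ReducTl} to $L^*R=\Delta_m(\g)^{-1}X$ to replace $L,R$ by rank-$r$ matrices, still with $\Delta_m(\g)L^*R=X$, without increasing $\|L\|_c\|R\|_c$; the first paragraph then forces equality to persist, and the displayed formula becomes the asserted norm-optimal Stinespring representation. The identity $\|T_X\|_{cb}=\|T_X\|$ is the one ingredient not of this form --- it plays here the role that Smith's theorem played for Schur multipliers --- and, as the introduction indicates, is most cleanly obtained from the duality of Section~3 (it may also be extracted once the optimal representation above is in hand).

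I expect the main obstacle to be this middle step: checking minimality of the concrete representation $T_X(a,B)=(\Omega_k)_{-}\Delta_k(a)V^*\l(X)\r(B)V$ and then transporting the abstract optimal representation onto it via \cite{C2}. Unlike in Theorem~\ref{BilThm}, where the second leg was the spatial representation $\Delta_n$ of $\ca_n$ with abelian commutant, here the second leg is the Schur representation $\r$ on $\bc^k\otimes\bc^k$, whose commutant $\l(M_k(\bc))$ is a full matrix algebra; so the bookkeeping with that commutant --- showing the optimal $T'$ has the form $\l(R)V$ and that $S'=V^*\l(L^*)$ via the intertwining $\Delta_k(\z)V^*=V^*\l(\Delta_k(\z))$, and then reading off $X=\Delta_m(\g)L^*R$ --- is what requires care. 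The remaining points (the reduction to $X$ without zero rows or columns, the norm estimates, and the rank reduction through Lemma~\ref{ReducTl}) are routine and parallel to the earlier arguments.
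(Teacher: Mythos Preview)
Your outline tracks the paper's proof closely, but there is a genuine gap at the minimality step. For the concrete Stinespring representation $T_X(a,B)=(\Omega_k)_{-}\Delta_k(a)\bigl(V^*\l(X)\bigr)\r(B)V$, the backward cyclicity condition on the $\r$-space reads
\[
\mathrm{span}\bigl\{\r(B)\l(X^*)V\Delta_k(a)\Omega_k\ :\ a\in\ca_k,\ B\in M_k(\bc)\bigr\}=\bc^k\otimes\bc^k,
\]
and since $\l(X^*)V\d_j=(X^*\d_j)\otimes\d_j$, this span equals $(\mathrm{range}\,X^*)\otimes\bc^k$. The hypothesis ``no zero row'' only guarantees that each $X^*\d_j$ is nonzero; it does \emph{not} make these vectors span $\bc^k$. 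Minimality of the concrete representation therefore requires $X$ to have rank $k$, i.e.\ to be invertible in $M_k(\bc)$ --- precisely what the paper assumes at the outset. For general $X$ the paper perturbs to $X_\e=X+\e W$ with $W$ a partial isometry from $\ker X$ onto $\ker X^*$, proves the factorization for the invertible $X_\e$, passes to a limit by compactness, and only then invokes Lemma~\ref{ReducTl} to restore the rank condition. Your reduction to ``no zero row or column'' is insufficient and this perturbation/compactness step cannot be bypassed.

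Once invertibility of $X$ is in place, your claim that $S'=V^*\l(L^*)$ can actually be justified: minimality of the \emph{optimal} representation forces $\mathrm{span}\{\r(B)\l(R)V\xi\}=\l(R)(\bc^k\otimes\bc^k)$ to be the whole space, so $R$ is invertible and one can solve $Y=\Delta_k(\g)^{-1}V^*\l(X)\l(R)^{-1}=V^*\l(L^*)$ with $L^*=\Delta_k(\g)^{-1}XR^{-1}$. The paper takes a somewhat different route here, never using invertibility of $R$: from the relation $(VY)\l(R)=VV^*\l(\Delta_k(\g)^{-1}X)$ it \emph{defines} $(L^*)_{(i,j)}:=(VY)_{((i,j),(i,i))}$ directly at the level of matrix entries and bounds $\|L\|_c\le\|Y\|$ by a row-norm estimate. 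Both routes give the result in the invertible case; the paper's is a bit more hands-on, yours is cleaner once the invertibility of $R$ is noted.
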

  
  \begin{proof} We will prove the theorem in the case when $m\, = \, n\, = \, k$ and then discus the general case afterwards. 
  The equality $\|T_X\| \, = \, \|T_X\|_{cb} $ follows  as Corollary \ref{TcbN} to Theorem \ref{Duality}, and the proof below  is independent of this result.

In the first part of the proof we will assume that $X$ is invertible in $M_k(\bc)$ and prove the result in this case. A compactness argument will then give the general result for quadratic matrices.   Using the notation from \cite{C1} it is immediate to obtain a Stinespring representation of $T_X$ in the following way 
\begin{equation} \label{StTx1} 
\forall a \in \ca_k\, \forall B \in M_k(\bc): \quad T_X(a,B) \, = \, (\Omega_k)_{-}\Delta_k(a) V^* \lambda(X) \rho(B)V.
\end{equation}
We will show that this Stinespring representation is minimal, so we have to show that the following equations hold
\begin{itemize} 
\item[(i)] span $(\{\r(B)V\xi\, : \, B \in M_k(\bc), \, \xi \in \bc^k\,\}) \, = \, \bc^k \otimes \bc^k.$ 

\item[(ii)] span $(\{\Delta_k(a)V^* \l(X)\r(B)V\xi\, : \,a \in \ca_k,  B \in M_k(\bc), \, \xi \in \bc^k\,\}) \newline \quad \quad = \, \bc^k.$ 
\item[(iii)] span $(\{\Delta_k(a) \Omega_k \, : \, a \in \ca_k \}) \, = \, \bc^k.$ 

\item[(iv)] span$(\{ \r(B)\l(X^*)V\Delta_k(a)\Omega_k \,  :  \, a \in \ca_k, B \in M_k(\bc) \, \})  \newline = \, \bc^k \otimes \bc^k.$ 
\end{itemize}
To verify item (i)  we remark that $\r(e_{(i,j)} )V\d_j  \, = \,   (\d_j \otimes \d_i).$   
 Since $X$ is  invertible and $\l$ is unital, $\l(X)$ is invertible and the verification of item (ii) then follows from (i) and the fact that the range of $V^*$ is all of $\bc^k.$ It is obvious that item (iii) holds. Item (iv) follows from the invertibility of $\l(X^*),$ the fact that $\l$ and $\r$ commute and the items (i) and (iii).
  We may then choose a norm optimal and minimal Stinespring representation for $T_X,$ and, based on Theorem 3.2  of \cite{C2}, we may assume that this Stinespring representation also uses the the representations $\Delta_k$ of $\ca_k$  and $\r$ of $M_k(\bc).$ Hence there exist a unit vector $\g$ in $\bc^k$ an operator $Y$ in $B(\bc^k \otimes \bc^k , \bc^k) $ such that $\|Y\| \, = \, \|T_X\|_{cb}$ and an operator $Z$ in $B(\bc^k, \bc^k \otimes \bc^k)$ such that $\|Z\| \, = \, 1$ with the following property
  
  \begin{equation} \label{StTx2} 
 \forall a \in \ca_k\, \forall B \in M_k(\bc): \quad T_X(a,B) \, = \, (\g)_{-}\Delta_k(a) Y \rho(B)Z.
\end{equation} 
From Theorem 3.2 of \cite{C2} we know that there exists an operator $\hat Z$ in the commutant of $\r(M_k(\bc))$  such that $\hat Z V \,= \, Z.$ We know that the commutant of $\r(M_k(\bc))$   equals $\l(M_k(\bc))$ so there exists a matrix $R$ in $M_k(\bc)$ such that $\hat Z \,= \, \l(R)$ and we have $Z \, = \, \l(R)V.$ The equation $\|Z\|\, = \, 1$ then by Lemma 2.6 of \cite{C1} implies that $\|R\|_c \, = \, 1.$   Since $X$ is assumed to be invertible, all the rows of $X$ are non vanishing, and then we can see that all the entries in $\g$ have to be non trivial, and the commutativity of the algebra of diagonal matrices show that $\g_{ -}\Delta_k(a) \, = \, (\Omega_k)_{-}\Delta_k(a) \Delta_k(\g).$ Before we start the following string of computations we remind you that $V\d_j := \d_j \otimes \d_j ,$ so we have $V\Delta_k(\xi) \, = \, \l(\Delta_k(\xi))V$ for any vector $\xi .$ Then the results contained in the items (i) and (iii)  and the remarks from above may be inserted into the equations (\ref{StTx1}) and (\ref{StTx2}), such that we can get the following string of equations.  

\begin{align}
\notag \Delta_k(\g)Y \l(R)  \, & =\, V^*\l(X) \\ 
 \notag Y \l(R)  \, & =\, \Delta_k(\g)^{-1}V^*\l(X) \\  \notag Y \l(R)  \, & =\, V^*\l(\Delta_k(\g)^{-1} X) \\ \label{VY} 
(VY) \l(R)  \, & =\, VV^*\l(\Delta_k(\g)^{-1} X) 
\end{align} 
We will write the equation \ref{VY} in tensor form based on the matrix units $e_{(i,j)} \otimes e_{(m,n)}, $ so we will first write the  expression for each of the operators appearing in equation (\ref{VY}). \begin{align*}  VY \,& = \, \sum_{i,j,m,n} (VY)_{((i,j),(m,n))} (e_{(i,j) }\otimes   e_{(m,n)}) \\ 
 \l(R) \, &= \, \sum_{s,t}R_{(s,t)} (e_{(s,t)} \otimes I)\\ VV^*\l(\Delta_k(\g)^{-1}X) \,& = \, \sum_{u,v} \g_u^{-1} X_{(u,v)} (e_{(u,v)} \otimes e_{(u,u)}).
  \end{align*}
Then equation (\ref{VY}) becomes 
\begin{equation} \label{MatProd} 
\sum_{i,j, m, n, t}  (VY)_{((i,j), (m,n))} R_{(j,t)} (e_{(i,t)} \otimes e_{(m,n)}) \, = \, \sum_{(u,v)}  \g_u^{-1} X_{(u,v)} (e_{(u,v)} \otimes e_{(u,u)}). 
\end{equation}
From here we define a matrix $L$ in $M_k(\bc)$ by defining it's adjoint $L^*$ via the formula 
\begin{equation} \label{L*} 
(L^*)_{ (i,j)} \, := \, (VU)_{(i,j),(i,i))}.
\end{equation} 
We see that the row norm of $L^*$ is dominated by the row norm of $VY,$ which is dominated by the norm of  $Y,$ so $\|L\|_c \leq \|Y^*\|\,= \, \|T_X\|_{cb}.$ 
An elementary computation and a comparison with the equation (\ref{MatProd}) show that $L^* R \,= \, \Delta_k(\g)^{-1} X$ and $ \Delta_k(\g) L^*R = X$ with $\|\g\|_2 = 1,$ $\|L\|_c \leq \|T_X\|_{cb},$  $ \|R \|_c =1$ and $$ T_X(a,B) \, = \,\big( \g_{-}\big) \Delta_k(a) \big(V^*\lambda(L^*)\big) \r(B) \big(\l(R)V\big),$$
so the theorem is proven in the case when $X$ is invertible.   
 
If $X$ is not invertible, then there exists a partial iosmetry $W$ of the kernel of $X$ onto the cokernel which equals the kernel of $X^*,$ and then for any    $\e > 0$ the operator $X_\e \, ;= \, X + \e W$ is invertible. The equation (\ref{StTx1}) implies that $\|T_X - T_{X_\e}\|_{cb} \leq \e \sqrt{k}.$ Then,  when we apply the result of this theorem to $X_\e,$  we find a unit vector $\g_\e$ in  $\bc^k,$ and matrices  $L_\e,$ $R_\e$ in $M_k(\bc)$  such that $\|L_\e\|_c \leq \|T_X\|_{cb} + \e\sqrt{k}, \|R_\e\|_c \leq 1 $ and $X_\e \, = \, \Delta_k(\g_\e) L^*_\e R_\e .$ A compactness argument then shows that there exists a unit vector $\g$ in $\bc^k$ and matrices $L_0, \, R_0$ such that $\|L_0\|_c \leq \|T_X\|_{cb} ,$ $\|R_0\|_c \leq 1$ and $X \, = \, \Delta_k(\g) L^*_0 R_0.$ In order to obtain the rank condition, which was mentioned in the theorem, we apply Lemma \ref{ReducTl},  and the theorem follows, in the quadratic case, except from the statement $\|T_X\| \, = \, \|T_X\|_{cb},$ which waits for the Corollary  \ref{TcbN}. In the rectangular case when  $X $ is in $M_{(m,n)} (\bc),$ we embed $X$ as $\tilde X$  into $M_k(\bc)$ with the same numbering and zeros elsewhere. One can verify that $\|T_{\tilde X}\|_{cb} \,= \, \|T_X\|_{cb}$ so $\tilde X\, = \, \Delta_k(\g_0) (L_0)^* R_0$ for a vector $\g_0$ in $\bc^k,$ a matrix $L_0 $ in $M_{(r,k)}(\bc)$ and a matrix $R_0$ in $M_{(r,k)}(\bc)$ such that $\|\g_0\|_2\|L_0\|_c \|R_0\|_c \, = \|T_X\|_{cb}.$ Let $P_m$ and $P_n$  denote the orthogonal projections of $\bc^k$ onto the subspaces spanned by the first $m$ and first $n$ basis vectors in $\bc^k,$ then $X \, = \Delta_m(P_m\g_0) (L_0P_m))^*(R_0P_n)$ will give the claimed factorization. 
 \end{proof} 
\section{Duality} 

 With the notation from above we will look at complex valued $m \times n $ matrices and define 6 compact convex subsets of these matrices  by 
\begin{align} \label{UnBalls} 
\cb_{(m,n)} \, &:= \, \{X \in M_{(m,n)}(\bc) \, : \, \|B_X\| \leq 1 \} \\ \notag
\cas_{(m,n)}\, &:= \, \overline{\mathrm{conv}}\big( \{X \in M_{(m,n)}(\bc)\, : \, X_{(i,j)}   = \bar l_i r_j ,\, |l_i| =1,\,  |r_j| = 1\,\} \big)\\ \notag 
\cc\cb_{(m,n)} \, & := \,\{X \in M_{(m,n)}(\bc) \, : \, \|B_X\|_{cb}  \leq 1 \} \\ \notag
\cc\cas_{(m,n)} \, & := \, \{X \in M_{(m,n)}(\bc) \, :  \, \|S_X\|_{cb}\leq 1 \} 
\\ \notag 
\cc\cf_{(m,n)}\,& : = \,\{X \in M_{(m,n)}(\bc)\, : \, \|F_X\|_{cb} \, \leq \, 1\,\}
\\ \notag
\cc\ct_{(m,n)} \,& := \, \{X \in M_{(m,n)}(\bc)\, : \, \|T_X\|_{cb} \, \leq 1  \}.
\end{align}
We have, indirectly, met these   sets in Section 2, and it follows from the way they are defined,  that  all of them are  compact convex subsets of $M_{(m,n)}(\bc)$ which are invariant under multiplication by complex scalars in the unit disc. 

In this  investigation  we need a couple of observations, which we list as propositions. 

Theorem \ref{OpThm} implies the following result.
\begin{proposition} \label{Ffac} Let $m,n $ be natural numbers then 
\begin{align*}
& \cc\cf{(m,n)}  \\ & = \, \{ C\Delta_n(\xi)\, : \, C \in M_{(m,n)}(\bc), \, \xi \in \bc^n, \,s.\,  t.\,  \|C\|_\infty\|\xi\|_2 \leq 1 \}. \end{align*}
\end{proposition}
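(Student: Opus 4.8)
The plan is to prove the two inclusions separately; both are quick consequences of Theorem~\ref{OpThm}, so the argument will be short. For the inclusion $\cc\cf_{(m,n)} \subseteq \{\,C\Delta_n(\xi)\,\}$, I would start from an $X$ with $\|F_X\|_{cb}\leq 1$ and simply invoke the factorization part of Theorem~\ref{OpThm}: it produces a unit vector $\xi\in\bc^n$ and a matrix $C\in M_{(m,n)}(\bc)$ with $X=C\Delta_n(\xi)$ and $\|C\|_\infty=\|F_X\|_{cb}$. Since $\|\xi\|_2=1$ this gives $\|C\|_\infty\|\xi\|_2=\|F_X\|_{cb}\leq 1$, which is exactly membership in the right-hand set. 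Nothing beyond the statement of Theorem~\ref{OpThm} is needed here.

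For the reverse inclusion I would start from $X=C\Delta_n(\xi)$ with $\|C\|_\infty\|\xi\|_2\leq 1$ and show $\|F_X\|_{cb}\leq 1$. The key step is to reproduce the Stinespring-type representation from the first paragraph of the proof of Theorem~\ref{OpThm}: the identity $\Delta_n(\xi)a_|=\Delta_n(a)\xi_|$ for $a\in\ca_n$ yields
$$F_X(a)\,=\,C\,\Delta_n(a)\,\xi_|,\qquad a\in\ca_n,$$
where $\Delta_n$ is a unital $*$-representation of $\ca_n$ on $\bc^n$, $C\in B(\bc^n,M_{(m,1)}(\bc))$, and $\xi_|\in B(\bc,\bc^n)$ has norm $\|\xi\|_2$. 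Amplifying by $M_k(\bc)$ gives $(F_X)_k(A)=(C\otimes I_k)\,(\Delta_n\otimes\id)(A)\,(\xi_|\otimes I_k)$, and since $\Delta_n\otimes\id$ is completely contractive this yields $\|(F_X)_k(A)\|\leq\|C\|_\infty\,\|\xi\|_2\,\|A\|\leq\|A\|$ for every $k$, hence $\|F_X\|_{cb}\leq 1$ and $X\in\cc\cf_{(m,n)}$.

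I do not anticipate a genuine obstacle. The only point requiring a little care is the operator-space bookkeeping in the second inclusion — writing $F_X$ honestly in the form $W^*\gamma(\cdot)V$ with $\gamma$ a $*$-representation and $\|W\|\,\|V\|\leq 1$, and invoking the standard fact that such a factorization through a $*$-representation is automatically completely bounded with completely bounded norm at most $\|W\|\,\|V\|$. This is precisely the computation already carried out inside the proof of Theorem~\ref{OpThm}, so it can be cited rather than redone.
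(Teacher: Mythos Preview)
Your proposal is correct and matches the paper's approach exactly: the paper simply records that Proposition~\ref{Ffac} follows from Theorem~\ref{OpThm}, and you have spelled out precisely how --- the forward inclusion from the optimal factorization in the statement of Theorem~\ref{OpThm}, and the reverse inclusion from the Stinespring representation $F_X(a)=C\Delta_n(a)\xi_|$ exhibited in its proof.
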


The Theorem \ref{BilThm} implies. 
 
 \begin{proposition}  \label{CBThm} Let $m $ and $n $ be natural numbers then \begin{align*}\cc\cb_{(m,n)} \, = \, \{  \Delta_m(\eta)^*C \Delta_n(\xi)\, :  \, C \in M&_{(m,n)}(\bc), \, \eta \in \bc^m, \, \xi \in \bc^n \, \\ & s.\, t. \, \|\eta\|_2\|C\|_\infty \|\xi\|_2 \leq 1\,\}.\end{align*} 
 \end{proposition}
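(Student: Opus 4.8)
The plan is to deduce Proposition~\ref{CBThm} directly from Theorem~\ref{BilThm}, showing the two inclusions between $\cc\cb_{(m,n)}$ and the set on the right-hand side.

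\textbf{Containment of $\cc\cb_{(m,n)}$ in the factorization set.} Suppose $X \in \cc\cb_{(m,n)}$, so that $\|B_X\|_{cb} \leq 1$. Theorem~\ref{BilThm} produces unit vectors $\xi \in \bc^n$ and $\eta \in \bc^m$ and a matrix $C \in M_{(m,n)}(\bc)$ with $\|C\|_\infty = \|B_X\|_{cb} \leq 1$ and $X = \Delta_m(\eta)^* C \Delta_n(\xi)$. Since $\|\eta\|_2 = \|\xi\|_2 = 1$, the product $\|\eta\|_2\|C\|_\infty\|\xi\|_2 = \|B_X\|_{cb} \leq 1$, so $X$ lies in the set on the right. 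This direction is essentially a restatement of Theorem~\ref{BilThm} and requires no new work.

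\textbf{The reverse containment.} Suppose $X = \Delta_m(\eta)^* C \Delta_n(\xi)$ with $\|\eta\|_2\|C\|_\infty\|\xi\|_2 \leq 1$. I would exhibit a Stinespring-type representation of $B_X$ directly: following the computation in equation~(\ref{St3}), for $y \in \ca_m$ and $z \in \ca_n$ one has
\begin{align*}
B_X(y,z) \, = \, \sum_{i,j} (y_i \bar\eta_i) C_{(i,j)} (\xi_j z_j) \, = \, (\eta_|)^* \Delta_m(y) C \Delta_n(z) \xi_| .
\end{align*}
Using the multiplicative description of the completely bounded norm of a bilinear map from equation~(\ref{CbMult}) (the same estimate already invoked in the proof of Theorem~\ref{BilThm} to get $\|B_X\|_{cb} \leq \|\mu\|_2\|D\|_\infty\|\nu\|_2$), this factorization of $B_X$ through the representations $\Delta_m$ and $\Delta_n$ gives $\|B_X\|_{cb} \leq \|(\eta_|)^*\|\,\|C\|_\infty\,\|\xi_|\| = \|\eta\|_2\|C\|_\infty\|\xi\|_2 \leq 1$, using the identity $\|\eta\|_2 = \|\eta_|\|_\infty$ from the introduction. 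Hence $X \in \cc\cb_{(m,n)}$.

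\textbf{Main obstacle.} There is no serious obstacle here: both inclusions are immediate once Theorem~\ref{BilThm} and the multiplicative estimate for $\|B_X\|_{cb}$ are in hand. The only point requiring a word of care is that the ``$\supseteq$'' direction is not literally part of Theorem~\ref{BilThm} (which only asserts existence of a factorization for a given $X$), so one must separately verify that an arbitrary factorization of the stated form forces $\|B_X\|_{cb} \leq 1$; but this is exactly the Stinespring estimate already carried out inside the proof of Theorem~\ref{BilThm}, applied verbatim. One should also note in passing that the set on the right is compact and convex and invariant under multiplication by disc scalars, consistent with the remark following~(\ref{UnBalls}), though this is not needed for the equality of sets itself.
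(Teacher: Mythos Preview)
Your proposal is correct and follows essentially the same approach as the paper: one inclusion is read off Theorem~\ref{BilThm}, and the reverse inclusion is exactly the Stinespring estimate $\|B_X\|_{cb} \leq \|\eta\|_2\|C\|_\infty\|\xi\|_2$ already established in the lines following equation~(\ref{St3}). The paper's own proof is even terser but makes precisely the same two observations.
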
 
 
 \begin{proof}
 The only thing we are missing in the proof is that for an $m \times n$  matrix $X = \Delta_m(\eta)^*C \Delta_n(\xi ) $ we have  $\|B_X\|_{cb} \leq \|\eta\|_2\|C\|_\infty\|\xi\|_2,$ but that was shown in the lines following  equation (\ref{St3}). 
\end{proof}  

 A combination of Proposition \ref{Opti} and Theorem \ref{LR2} yields immediately the following result.
 
\begin{proposition} \label{ScFac} Let $m,n $ be natural numbers and $l  :=  \min\{m,\, n\}, $ then 
$$\cc\cas_{(m,n)}  \, = \, \{ L^*R\, : \, L \in M_{(l,m)}(\bc), \|L\|_c \leq 1,\, R \in M_{(l,n)}(\bc) ,  \|R\|_c \leq 1\}.  $$ 
\end{proposition}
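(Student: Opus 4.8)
The plan is to prove the two inclusions separately, noting that the inclusion "$\supseteq$" is exactly Proposition~\ref{Opti} combined with the already-established equality $\|S_X\|_{cb}=\|S_X\|$ from Theorem~\ref{LR2} (or Smith's theorem). Indeed, if $X=L^*R$ with $L\in M_{(l,m)}(\bc)$, $R\in M_{(l,n)}(\bc)$, $\|L\|_c\le 1$, $\|R\|_c\le 1$, then Proposition~\ref{Opti} gives $\|S_X\|\le\|L\|_c\|R\|_c\le 1$, so $X\in\cc\cas_{(m,n)}$; this direction needs no comment beyond invoking those two results. The real content is the reverse inclusion.

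For "$\subseteq$", take $X\in\cc\cas_{(m,n)}$, i.e. $\|S_X\|=\|S_X\|_{cb}\le 1$. Theorem~\ref{LR2} applied to $X$ (with $r=\operatorname{rank}X$) produces $L_0\in M_{(r,m)}(\bc)$ and $R_0\in M_{(r,n)}(\bc)$ with $L_0^*R_0=X$ and $\|L_0\|_c\|R_0\|_c=\|S_X\|_{cb}\le 1$. The only discrepancy with the asserted form is that the common inner dimension in Theorem~\ref{LR2} is $r\le l=\min\{m,n\}$, whereas the proposition asks for matrices of size $l\times m$ and $l\times n$. This is handled by padding: embed $L_0$ and $R_0$ into $M_{(l,m)}(\bc)$ and $M_{(l,n)}(\bc)$ respectively by adjoining $l-r$ zero rows, obtaining $L$ and $R$. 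Padding with zero rows does not change the column norms, so $\|L\|_c=\|L_0\|_c$ and $\|R\|_c=\|R_0\|_c$, and $L^*R=L_0^*R_0=X$ since the adjoined rows contribute nothing to the product. Finally, one may rescale: replacing $(L,R)$ by $(tL,t^{-1}R)$ with $t=(\|R\|_c/\|L\|_c)^{1/2}$ (when both are nonzero; the degenerate cases $X=0$ are trivial) we may assume $\|L\|_c=\|R\|_c=(\|L\|_c\|R\|_c)^{1/2}\le 1$, which gives exactly the form claimed.

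I do not expect any genuine obstacle here — the statement is essentially a bookkeeping repackaging of Theorem~\ref{LR2} (for the hard inclusion) and Proposition~\ref{Opti} (for the easy one), with the only mild subtlety being the reconciliation of the inner dimension $r$ with $\min\{m,n\}$, which the zero-row padding settles. If one wishes, one can even observe that $r\le\min\{m,n\}$ always holds since $X$ is $m\times n$, so the padding is always available. The proof is therefore a two-line argument citing the two earlier results, plus the remark on padding and rescaling.
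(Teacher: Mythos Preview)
Your proposal is correct and follows exactly the route the paper indicates: the paper simply says the proposition ``follows immediately'' from Proposition~\ref{Opti} and Theorem~\ref{LR2}, and you have supplied precisely the bookkeeping (zero-row padding from $r$ up to $l$, and rescaling to make each column norm individually at most~$1$) that turns that remark into a complete argument.
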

Theorem \ref{Tx} implies the following proposition.  
\begin{proposition} \label{TxFac} Let $m,n $ be natural numbers and $l  :=  \min\{m,\, n\}, $ then 
\begin{align*} \cc\ct_{(m,n)}  \, = \, \{ \Delta_m(\g)L^*R\, &: \, \g \in \bc^m, \|\g\|_2 \leq 1, \, L \in M_{(l,m)}(\bc), \|L\|_c \leq 1,\\ & \quad  R  \in M_{(l,n)}(\bc) ,  \|R\|_c \leq 1\}.  
\end{align*}
\end{proposition}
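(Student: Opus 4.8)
Write $\cd$ for the set on the right-hand side of the asserted equality. The plan is to prove the two inclusions separately, obtaining $\cc\ct_{(m,n)}\subseteq\cd$ directly from Theorem \ref{Tx} and $\cd\subseteq\cc\ct_{(m,n)}$ by building a (not necessarily optimal) Stinespring representation of $T_X$ out of a given factorization.

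For $\cc\ct_{(m,n)}\subseteq\cd$, suppose $\|T_X\|_{cb}\le1$ and let $r$ be the rank of $X$. Theorem \ref{Tx} supplies $\g\in\bc^m$, $L\in M_{(r,m)}(\bc)$ and $R\in M_{(r,n)}(\bc)$ with $\Delta_m(\g)L^*R=X$ and $\|\g\|_2\|L\|_c\|R\|_c=\|T_X\|_{cb}\le1$. Since $r\le\min\{m,n\}=l$, I would pad $L$ and $R$ with $l-r$ zero rows so as to regard them as elements of $M_{(l,m)}(\bc)$ and $M_{(l,n)}(\bc)$; adding zero rows changes neither their column norms nor the identity $\Delta_m(\g)L^*R=X$. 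Finally, because the product of the three nonnegative numbers $\|\g\|_2$, $\|L\|_c$, $\|R\|_c$ is at most $1$, a routine rescaling (pass from $(\g,L)$ to $(\g/\|\g\|_2,\|\g\|_2L)$, then from $(L,R)$ to $(L/\|L\|_c,\|L\|_c R)$, and dispose of the case $X=0$ by taking all three data equal to $0$) yields a factorization of the same $X$ in which each of the three norms is $\le1$. This exhibits $X\in\cd$.

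For $\cd\subseteq\cc\ct_{(m,n)}$, let $X=\Delta_m(\g)L^*R$ with $\g\in\bc^m$, $L\in M_{(l,m)}(\bc)$, $R\in M_{(l,n)}(\bc)$ and $\|\g\|_2\le1$, $\|L\|_c\le1$, $\|R\|_c\le1$. Put $k=\max\{m,n\}$ and embed $\g$, $L$, $R$ and $X$ into $\bc^k$ and $M_k(\bc)$ by padding with zeros, as at the end of the proof of Theorem \ref{Tx}; this preserves $\|\g\|_2$, $\|L\|_c$, $\|R\|_c$, the identity $X=\Delta_k(\g)L^*R$ in $M_k(\bc)$, and the value of $\|T_X\|_{cb}$. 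Starting from the Stinespring representation (\ref{StTx1}) of $T_X$, which is valid for every $X\in M_k(\bc)$, I would substitute $\l(X)=\l(\Delta_k(\g))\l(L^*)\l(R)$ (using that $\l$ is a unital representation of $M_k(\bc)$), commute $\l(R)$ past $\r(B)$ (their ranges of definition lie in commuting algebras), and use $V^*\l(\Delta_k(\g))=\Delta_k(\g)V^*$ (taking adjoints in the relation $V\Delta_k(\xi)=\l(\Delta_k(\xi))V$ recorded in the proof of Theorem \ref{Tx}) together with $(\Omega_k)_{-}\Delta_k(a)\Delta_k(\g)=\g_{-}\Delta_k(a)$ to rewrite (\ref{StTx1}) as
$$T_X(a,B)=(\g_{-})\Delta_k(a)\big(V^*\l(L^*)\big)\r(B)\big(\l(R)V\big).$$
This is a Stinespring representation of the completely bounded bilinear map $T_X$, so the standard estimate for such representations (used already in the proofs of Theorems \ref{BilThm} and \ref{Tx}) gives $\|T_X\|_{cb}\le\|\g_{-}\|\,\|V^*\l(L^*)\|\,\|\l(R)V\|$; by $\|\g_{-}\|=\|\g\|_2$ and Lemma 2.6 of \cite{C1} — which, exactly as in the proof of Proposition \ref{Opti}, gives $\|V^*\l(L^*)\|=\|L\|_c$ and $\|\l(R)V\|=\|R\|_c$ — this is $\le\|\g\|_2\|L\|_c\|R\|_c\le1$. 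Hence $X\in\cc\ct_{(m,n)}$.

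The step that needs care is the inclusion $\cd\subseteq\cc\ct_{(m,n)}$: the crux is that an \emph{arbitrary} factorization $X=\Delta_m(\g)L^*R$ with the stated norm bounds — not merely the optimal one produced by Theorem \ref{Tx} — still gives rise to a bona fide Stinespring representation of $T_X$, so that $\|\g\|_2\|L\|_c\|R\|_c$ is an upper bound for $\|T_X\|_{cb}$; combined with the matching lower bound built into Theorem \ref{Tx}, this forces the two descriptions of the set to agree. The remaining work — padding matrices with zeros without disturbing column norms, and keeping track of which space ($M_{(m,n)}(\bc)$, $M_k(\bc)$, or the ambient $\bc^k\otimes\bc^k$) each operator acts on — is routine bookkeeping, but should be written out.
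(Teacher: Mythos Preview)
Your proposal is correct and is exactly the natural elaboration of the paper's one-line justification, which reads in full ``Theorem \ref{Tx} implies the following proposition.'' You supply precisely the two ingredients that sentence leaves implicit: the passage from the rank-$r$ factors of Theorem \ref{Tx} to size-$l$ factors by zero-padding and rescaling, and the observation that the Stinespring formula displayed in Theorem \ref{Tx} is valid for \emph{any} factorization $X=\Delta_m(\g)L^*R$, not only the optimal one, so that $\|\g\|_2\|L\|_c\|R\|_c$ always dominates $\|T_X\|_{cb}$.
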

  \begin{remark} The Lemma \ref{ReducTl} implies that in the propositions above the number $l$ can be removed, but the introduction of $l$ strengthens the lemma in the way that it shows that all operators in those unit balls may be obtained with this limitation imposed.  
\end{remark}   
  We will need the following observation in the proof of Theorem \ref{Duality}.  
  \begin{lemma} \label{LemT} 
Let $T$ be in $M_{(m,n)}(\bc)$ such that $\|T\|_2 \, = \, 1.$  
There exists a matrix $R$ with $\|R\|_c \,= \, 1 $ in $M_{(m,n)} (\bc)$ such that all columns have norm either 1 or 0 and a unit vector $\xi$ in $\bc^n$ with 
positive entries, such that $ T \, = \, R\Delta_n(\xi).$ 

If $R$ in $M_{(m,n)}(\bc) $ satisfies $\|R\|_c \, \leq \,1 $ and $\xi$ in $\bc^n$ has norm at most 1,
 then for $T\, : = \, R\Delta_n(\xi):$ 
$\|T\|_2 \, \leq \, 1.$ \end{lemma}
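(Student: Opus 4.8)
\textbf{Proof proposal for Lemma \ref{LemT}.}

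The plan is to treat the two directions separately, and to observe that both come down to the same elementary identity relating the Hilbert--Schmidt norm of a matrix to its columns. First I would recall that if $T \in M_{(m,n)}(\bc)$ has columns $T^{(1)},\dots,T^{(n)}$, then $\|T\|_2^2 = \sum_{j=1}^n \|T^{(j)}\|_2^2$, and that by the remarks in the introduction $\|T\|_c^2 = \|\mathrm{diag}(T^*T)\| = \max_j \|T^{(j)}\|_2^2$. These two facts are the whole engine of the proof.

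For the first assertion, given $T$ with $\|T\|_2 = 1$, I would define $\xi \in \bc^n$ by $\xi_j := \|T^{(j)}\|_2$. Then $\|\xi\|_2^2 = \sum_j \|T^{(j)}\|_2^2 = \|T\|_2^2 = 1$, so $\xi$ is a unit vector with nonnegative entries; replacing a zero entry causes no trouble. Now set $R$ to be the matrix whose $j$-th column is $\xi_j^{-1} T^{(j)}$ when $\xi_j \neq 0$ and is the zero column when $\xi_j = 0$. By construction every column of $R$ has norm $1$ or $0$, hence $\|R\|_c = 1$ (it is exactly $1$ rather than $\leq 1$ since $\xi \neq 0$ forces at least one column of norm $1$), and $R\Delta_n(\xi) = T$ column by column: the $j$-th column of $R\Delta_n(\xi)$ is $\xi_j$ times the $j$-th column of $R$, which recovers $T^{(j)}$ in both cases. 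One minor point to handle cleanly is the case $\xi_j = 0$, i.e. $T^{(j)} = 0$: then the $j$-th column of $R\Delta_n(\xi)$ is $0 \cdot (\text{anything}) = 0 = T^{(j)}$, so the choice of the zero column for $R$ is consistent (any unit column would also work, but the zero column keeps the statement's phrasing "norm either $1$ or $0$" honest and matches $\|R\|_c = 1$).

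For the converse, suppose $\|R\|_c \leq 1$ and $\|\xi\|_2 \leq 1$, and set $T = R\Delta_n(\xi)$. The $j$-th column of $T$ is $\xi_j R^{(j)}$, so $\|T\|_2^2 = \sum_j |\xi_j|^2 \|R^{(j)}\|_2^2 \leq \|R\|_c^2 \sum_j |\xi_j|^2 = \|R\|_c^2 \|\xi\|_2^2 \leq 1$, giving $\|T\|_2 \leq 1$. There is no real obstacle here; the only thing to be careful about is bookkeeping with the two conventions for $\xi_|$ versus $\xi$ and making sure the column-wise description of matrix multiplication by a diagonal matrix is stated explicitly, since that identity is used in both halves. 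I expect the hardest part to be purely expository — deciding how much of the elementary column/Hilbert--Schmidt arithmetic to spell out — rather than anything mathematical.
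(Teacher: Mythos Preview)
Your proof is correct and is essentially the same as the paper's: the paper also sets $\xi_j := (\sum_i |T_{(i,j)}|^2)^{1/2}$ and $R_{(i,j)} := T_{(i,j)}/\xi_j$ when $\xi_j\neq 0$ (and $0$ otherwise), and for the converse states exactly your inequality $\|T\|_2 \leq \|R\|_c\|\xi\|_2$. The only cosmetic difference is that the paper phrases the case split entrywise while you phrase it columnwise, which amounts to the same construction.
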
 

\begin{proof}
Given a $T$ with $\|T\|_2 \, = \, 1,$ then define $\xi_j \, := (\sum_i |T_{(i,j)}|^2)^\frac{1}{2}$ and $$R_{(i,j)} \, :=\, \begin{cases} 0 \quad \quad \quad \text{ if }  T_{(i,j)} \, = \, 0,\\
T_{(i,j)}/\xi_j \,  \text{ if } T_{(i,j)} \neq 0.\end{cases}$$
It follows by a direct computation that  we have obtained $T = R\Delta_n(\xi)$ with the right properties. Similarly  for a $T$ of the form $R\Delta_n(\xi)$  we have  $\|T\|_2 \, \leq \,\|R\|_c\|\xi\|_2. $ 
\end{proof}

 We will now look at the inner product in $M_{(m,n)}(\bc)$ defined via the trace Tr$_n$ on $M_n(\bc)$ satisfying Tr$_n(I) = n$  by \begin{equation} \label{InP}
 \forall X, Y \in  M_{(m,n)}(\bc): \quad  \langle X, Y \rangle\, := \, \mathrm{Tr}_n(Y^*X).
  \end{equation}  As usual we define the polar $\cd^\circ$ of a subset $\cd$ of $M_{(m,n)}(\bc)$ via the expression
  \begin{equation} \label{Polar} 
  \cd^\circ \, := \, \{ X \in M_{(m,n)}(\bc) \, : \, \forall D \in \cd, |\langle X, D \rangle | \leq 1 \, \}.  
  \end{equation}

   The 6  sets defined in the definitions (\ref{UnBalls}) all have the property that they are equal to their bi-polars,  so the polar circle in the statements below may be moved to the other side.  
  
  \begin{theorem} \label{Duality}  for any pair of natural numbers $m, n$ we have the following relations 
  \begin{align} \label{PolB}
  \cb_{(m,n)} \, &= \, 
\big(\cas_{(m,n)}\big)^\circ \\ \label{PolCB} 
\cc\cb_{(m,n)} \, & = \, 
\big(\cc\cas_{(m,n)}\big)^\circ \\
 \label{PolCF} 
\cc\cf_{(m,n)} \, & = \, 
\big(\cc\ct_{(m,n)}\big)^\circ.
\end{align} 
\end{theorem}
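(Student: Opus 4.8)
The plan is to prove the three polarity relations one at a time, in each case using the factorization characterizations from Section 2 together with the elementary fact that a closed convex set invariant under the disc is the polar of its polar. For each pair of sets I will establish the two inclusions $\mathcal{D} \subseteq \mathcal{E}^\circ$ and $\mathcal{E}^\circ \subseteq \mathcal{D}$ by computing the bilinear pairing $\langle X, Y\rangle = \mathrm{Tr}_n(Y^*X)$ on the factored forms of the generators.

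\textbf{The pairing computations.} The key algebraic identity underlying all three cases is that for rank-one–type generators the trace pairing collapses to a scalar product which is controlled by the relevant norms. First I would record that for $X$ and a generator $Y$ of $\mathcal{S}_{(m,n)}$ of the form $Y_{(i,j)} = \bar l_i r_j$ with $|l_i| = |r_j| = 1$, one has $\langle X, Y\rangle = \mathrm{Tr}_n(Y^*X) = \sum_{i,j} l_i \bar r_j X_{(i,j)} = B_X(l, \bar r)$ where $l \in \mathcal{A}_m$, $\bar r \in \mathcal{A}_n$ are unit-norm functions; hence $|\langle X,Y\rangle| \le \|B_X\|$, giving $\mathcal{B}_{(m,n)} \subseteq (\mathcal{S}_{(m,n)})^\circ$ immediately, and conversely, since $\|B_X\|$ is by definition the sup over exactly such test elements, equality holds. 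This proves (\ref{PolB}). For (\ref{PolCB}) and (\ref{PolCF}) I would use the factorizations: for $X = \Delta_m(\eta)^* C \Delta_n(\xi) \in \cc\cb_{(m,n)}$ (Proposition \ref{CBThm}, with $\|\eta\|_2\|C\|_\infty\|\xi\|_2 \le 1$) and $Y = L^*R \in \cc\cas_{(m,n)}$ (Proposition \ref{ScFac}, with $\|L\|_c, \|R\|_c \le 1$), a direct trace manipulation should give $\langle X, Y\rangle = \mathrm{Tr}(R^*L\,\Delta_m(\eta)^* C\,\Delta_n(\xi))$, which after grouping columns is bounded by $\|\eta\|_2\,\|C\|_\infty\,\|\xi\|_2 \cdot \|L\|_c\,\|R\|_c \le 1$, using $\|L\|_c^2 = \|\mathrm{diag}(L^*L)\|$ and Cauchy–Schwarz on the diagonal entries. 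Similarly for (\ref{PolCF}), pairing $X = C\Delta_n(\xi) \in \cc\cf_{(m,n)}$ (Proposition \ref{Ffac}) against $Y = \Delta_m(\gamma)L^*R \in \cc\ct_{(m,n)}$ (Proposition \ref{TxFac}) should yield $|\langle X, Y\rangle| \le \|C\|_\infty\|\xi\|_2\|\gamma\|_2\|L\|_c\|R\|_c \le 1$.

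\textbf{The reverse inclusions.} These are where the real content lies. For $(\cc\cas_{(m,n)})^\circ \subseteq \cc\cb_{(m,n)}$, I would take $X$ in the polar and show $\|B_X\|_{cb} \le 1$. The natural route is to use the factorization norm characterization: Theorem \ref{BilThm} says $\|B_X\|_{cb}$ equals the infimum of $\|\eta\|_2\|C\|_\infty\|\xi\|_2$ over factorizations $X = \Delta_m(\eta)^*C\Delta_n(\xi)$. One shows that if every $Y$ with $\|L\|_c,\|R\|_c \le 1$ pairs with $X$ to at most $1$, then $X$ admits such a factorization with the product of norms $\le 1$; equivalently I would argue by Hahn–Banach / bipolar duality that the polar of $\cc\cas_{(m,n)}$ cannot be strictly larger than $\cc\cb_{(m,n)}$ because the first inclusion already shows $\cc\cb_{(m,n)} \subseteq (\cc\cas_{(m,n)})^\circ$, and then a dimension-counting or extreme-point argument forces equality. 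Cleaner: establish BOTH $\mathcal{B} \subseteq \mathcal{S}^\circ$ AND $\mathcal{S} \subseteq \mathcal{B}^\circ$ (the latter by the same pairing bound, now reading $|\langle X,Y\rangle| \le \|S_Y\|$ or its completely bounded analog for $Y \in \cc\cas$, against $X \in \cc\cb$), which gives $\mathcal{S}^{\circ\circ} \subseteq \mathcal{B}^\circ \subseteq \mathcal{S}^{\circ}$... and then invoke the bipolar theorem ($\mathcal{S}^{\circ\circ} = \mathcal{S}$ since $\mathcal{S}$ is closed, convex, disc-invariant, as noted after (\ref{UnBalls})) to pin down $\mathcal{B} = \mathcal{S}^\circ$. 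I expect the analogous bipolar bookkeeping to handle (\ref{PolCB}) and (\ref{PolCF}) once the pairing inequality is proved symmetrically in both arguments.

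\textbf{Main obstacle.} The genuinely delicate point is proving the pairing inequality $|\langle X, Y\rangle| \le 1$ for $X \in \cc\cb_{(m,n)}$ (resp.\ $\cc\cf_{(m,n)}$) against $Y \in \cc\cas_{(m,n)}$ (resp.\ $\cc\ct_{(m,n)}$) in the direction that does \emph{not} come for free from one side's factorization — i.e.\ verifying that $\|S_Y\|_{cb} \le 1$ (together with the completely bounded Stinespring data of Theorem \ref{LR2}) really does control $|\mathrm{Tr}_n(Y^*X)|$ when $X$ ranges over all of $\cc\cb_{(m,n)}$. Here I anticipate needing Lemma \ref{LemT} to rewrite one factor as $R\Delta_n(\xi)$ with column-normalized $R$ and a unit vector $\xi$, so that the Hilbert–Schmidt normalization is matched to the column-norm normalization on the other side; the lemma's two-sided statement is presumably there precisely to bridge $\|\cdot\|_2$ and $\|\cdot\|_c$ in this computation. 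Once that normalization is set up correctly, the estimate should reduce to Cauchy–Schwarz applied diagonal-entry-wise, but getting the bookkeeping of which matrix carries which diagonal scaling exactly right — and confirming the completely bounded norms (not just the ordinary ones) are the right quantities throughout — is the step I would spend the most care on.
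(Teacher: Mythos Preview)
Your handling of (\ref{PolB}) is fine and matches the paper. The gap is in your ``cleaner'' strategy for (\ref{PolCB}) and (\ref{PolCF}): the two inclusions $\cc\cb \subseteq (\cc\cas)^\circ$ and $\cc\cas \subseteq (\cc\cb)^\circ$ are \emph{logically equivalent}, since $|\langle X,Y\rangle| = |\langle Y,X\rangle|$ and both sets are disc-invariant. Each simply says that the pairing is bounded by $1$ on $\cc\cb \times \cc\cas$. So establishing ``both'' plus the bipolar theorem yields only $\cc\cb \subseteq (\cc\cas)^\circ$, never the reverse containment. Your earlier sentence---``one shows that if every $Y$ pairs with $X$ to at most $1$, then $X$ admits such a factorization''---is exactly the missing content, and you give no mechanism for it.

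The paper supplies that mechanism differently in the two cases. For (\ref{PolCB}) it proves the elementary identity
\[
\mathrm{Tr}_n\big(X^*\,\Delta_m(\eta)^* C\,\Delta_n(\xi)\big) \;=\; \big\langle (\bar X \circ C)\,\xi,\ \eta\big\rangle ,
\]
which recognizes the pairing of $X$ against an arbitrary element of $\cc\cb_{(m,n)}$ as the action of the Schur multiplier $S_{\bar X}$ on a contraction $C$, tested against unit vectors. Letting $\eta,\xi,C$ range freely shows immediately that $X\in(\cc\cb)^\circ$ forces $\|S_{\bar X}\|\le 1$, hence $(\cc\cb)^\circ \subseteq \cc\cas$; combined with the easy inclusion this gives $(\cc\cb)^\circ = \cc\cas$ and then bipolar finishes. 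Your factorization-on-both-sides computation never sees this, because you expand $Y=L^*R$ as well, destroying the Schur-product structure. For (\ref{PolCF}) the paper instead argues contrapositively: if $\|F_X\|_{cb}>1$, then by Smith's theorem $\|(F_X)_m\|>1$, and from a near-extremal test matrix $A\in M_m(\ca_n)$ and unit vectors $\Lambda,\Theta$ one manufactures matrices $R$ (column-norm $\le 1$) and $T$ (Hilbert--Schmidt norm $1$) with $|\langle X, TR\rangle|>1$. Lemma~\ref{LemT} is then used exactly to rewrite $T = \Delta_m(\eta)L^*$ with $\|\eta\|_2=\|L\|_c=1$, placing $TR$ in $\cc\ct_{(m,n)}$. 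You correctly guessed Lemma~\ref{LemT} is relevant, but the crucial input you are missing is Smith's $\|F_X\|_{cb}=\|(F_X)_m\|$, which is what makes the witness finite-dimensional and explicitly constructible.
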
 

\begin{proof} The equation (\ref{PolB}) is based on the identity $$\mathrm{Tr}_n\big( (\bar l_i r_j)^*X\big) \, = \, \sum_{i, j}  l_i x_{(i,j)} \bar r_j$$ and then it follows from the definition of the norm of $B_X$ as a bilinear operator on the pair of C*-algebras $\ca_m \times \ca_n,$ and the fact that the extreme points in the unit-ball of these algebras are the unitaries.
 With respect to (\ref{PolCB}) we will introduce the conjugation operation on $M_{(m,n)}(\bc)$ which is defined by $(\overline{X})_{(i,j) }\, := \, \overline{X_{(i,j)} }.$ Since the transposition and the adjoint operation both are isometries with respect to the operator norm, it follows that the conjugation operation is a conjugate linear isometry on $M_{(m,n)} (\bc)$ equipped with the operator norm and hence we get that $\|S_X\| = \|S_{\overline{X}}\|.$ An elementary calculation shows that for matrices $C, X$ in $M_{(m,n)}(\bc)$ and vectors $\eta$ in $\bc^m,$ $\xi $ in $\bc^n$ we have 
 \begin{equation} \label{dual} 
 \mathrm{Tr}_n\big(X^*\Delta_m(\eta)^*C \Delta_n(\xi)\big) \, = \, \langle \big(\bar X \circ C \big) \xi, \, \eta \rangle.
 \end{equation}
The equality in Proposition  \ref{CBThm} may be applied to the equation just above, and  when (\ref{dual}) is  read  from the left to the right we find  that $(\cc\cb_{(m,n)})^\circ \subseteq \cc\cas_{(m,n)}.$ On the other hand, when read from the right to the left it shows that $\cc\cas_{(m,n)} \subseteq (\cc\cb_{(m,n)})^\circ.$ This concludes the proof of the equations (\ref{PolB}) and (\ref{PolCB}), but  the proof of equation (\ref{PolCF}) is a bit more complicated. 

 We have noticed that both of the sets $\cc\cf_{(m,n)}$ and $\cc\ct_{(m,n)}$  equal their bipolar, so it is sufficient to prove that $$\cc\cf_{(m,n)}\, \subseteq \cc\ct_{(m,n)}^\circ  \text{ and }  \big(M_{(m,n)} (\bc)  \setminus \cc\cf_{(m,n)} \big)  \subseteq  \big( M_{(m,n)}(\bc) \setminus \cc\ct_{(m,n)}^\circ \big).$$
 To show the first inclusion we choose $X$  in $\cc\cf_{(m,n)}$ and $Y$ in $\cc\ct_{(m,n)}.$ Then $X = C\Delta_n(\xi) $ with $\|C\|_\infty \leq 1, $  $\|\xi\|_2 \leq 1, $  and $Y = \Delta_m(\eta) L^*R$ with $\|\eta\|_2 \leq 1$ and $  \|L\|_c \|R\|_c \leq 1.$  Then  by equation (\ref{dual}) we have  \begin{align*}  |\langle Y, X \rangle | \,& = \, |\mathrm{ Tr}\big(\Delta_n(\xi)^* C^* \Delta_m(\eta) (L^*R)\big)| \\ & = | \langle (\bar C \circ (L^*R)) \bar \xi, \, \bar \eta \rangle | \, \leq \,1, \end{align*}    
 so $\cc\cf_{(m,n)} \subseteq \cc\ct_{(m,n)}^\circ.$  
 
 Let us now suppose that $X$ in $M_{(m,n)}(\bc)$ is not in $\cc\cf_{(m,n)}$ then $\|F_X\|_{cb} > 1$ and recall that we look at $F_X $ as an operator with image in the operator space $M_m(\bc).$ Hence by Smith's result  \cite{Sm}, \cite{Pa} Proposition 8.11 we have $\|F_X\|_{cb} = \|(F_X)_m\|.$  
 This means that there exists a contraction $A = (A_{(i,j)})$ in $M_m(\ca_n),$ a unit vector $\Lambda $ in $\bc^m$ and a vector $\Theta$ in the sum of $m$ copies of $\bc^m$ with  $\Theta = (\Theta_1, \dots , \Theta_m) $ $\Theta_s = ( \theta_{(s,1)} , \dots , \theta_{(s,m)})$ and $\sum_{s,i}  |\theta_{(s,i)}|^2 = 1$ such that \begin{equation} \label{Domination} 
 \big| \langle (F_X)_m (A) \Lambda , \, \Theta \rangle \big| > 1.
\end{equation}  
We can define a column operator $B$ in $M_{(m,1)}(\ca_n ) $ by $B\, := \, A \Lambda$ and then $B$ is a contraction which means that we may define an $ m \times n$ scalar matrix  $R$  which is column norm bounded by $1$ as  \begin{equation}
\label{Ld} R \in M_{(m,n)} (\bc), \quad R_{(s,j)} \, = \, \overline{B_{(s,1)}(j) }.
\end{equation} 
The unit vector $\Theta$ may be used to define an $m \times m$ matrix $T$ such that $\|T\|_2 = 1 $ by 
\begin{equation}
\label{Td} T \in M_m(\bc) , \quad T_{(i,s)} \, :=\,  \theta_{(s,i)}. 
\end{equation} We may then compute 
\begin{align} \label{Dual3} 
1 \, & < \, \big| \langle (F_X)_m (A) \Lambda , \, \Theta \rangle \big|  \\  \notag &= \, 
\big| \sum_{s,t =1}^m ( \sum_{i=1}^m \sum_{j=1}^n X_{(i,j)}A_{(s,t)}(j) \l_t \bar \theta_{(s,i)} )\big|  \\  \notag &= \, 
 \big| \sum_{s =1}^m ( \sum_{i=1}^m \sum_{j=1}^n X_{(i,j)}B_{(s,1)}(j) \bar \theta_{(s,i)}) \big|  \\  \notag & =\,
  \big| \sum_{i=1}^m \sum_{j=1}^n X_{(i,j)} \sum_{s =1}^m (R^*)_{(j,s)}(T^*)_{(s,i)}  \big|  \\  \notag & =\,
  \big|  \sum_{i=1}^m \sum_{j=1}^n X_{(i,j)}(R^*T^*)_{(j,i)}  \big|  \\  \notag & =\,
  \big|\langle X, TR \rangle \big|  .
\end{align}
Recall that $\|T\|_2 = 1, $ in $M_m(\bc)$ 
and then by Lemma \ref{LemT} there exists a unit vector $\eta $ in $\bc^m$ and an operator $L$ in $M_m(\bc)$ such that $\|L\|_c = 1 $ and $T^* \, = \, L\Delta_m(\eta)^*,$ and then $T \, = \,\Delta_m(\eta) L^*.$ If this equation is inserted into the equation (\ref{Dual3}) we see that  
$X$ is not in the polar $\cc\ct_{(m,n)}^\circ$ and the theorem follows.
\end{proof}

The equation (\ref{PolCF}) implies the following corollary, which completes the proof of Theorem \ref{Tx}.

\begin{corollary} \label{TcbN}
Let $X$ be a complex $m \times n$ matrix, then $\|T_X\| = \|T_X\|_{cb}.$ 
\end{corollary}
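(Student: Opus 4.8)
The plan is to derive Corollary \ref{TcbN} from the polarity relation (\ref{PolCF}) established in Theorem \ref{Duality}, namely $\cc\cf_{(m,n)} = (\cc\ct_{(m,n)})^\circ$. The inequality $\|T_X\|_{cb} \geq \|T_X\|$ is automatic, since taking $k=1$ in the definition of the completely bounded norm for bilinear maps recovers the ordinary bilinear norm; so the only content is the reverse inequality $\|T_X\|_{cb} \leq \|T_X\|$. Equivalently, since both quantities scale linearly in $X$, it suffices to show that $\|T_X\| \leq 1$ implies $X \in \cc\ct_{(m,n)}$, i.e. $\|T_X\|_{cb} \leq 1$.

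First I would observe that, because $\cc\ct_{(m,n)}$ equals its own bipolar (as noted in the excerpt, all six sets in (\ref{UnBalls}) have this property), the relation (\ref{PolCF}) can be rewritten as $\cc\ct_{(m,n)} = (\cc\cf_{(m,n)})^\circ$. So I must check that $\|T_X\| \leq 1$ forces $|\langle X, Y\rangle| \leq 1$ for every $Y \in \cc\cf_{(m,n)}$. By Proposition \ref{Ffac}, such a $Y$ has the form $Y = C\Delta_n(\xi)$ with $\|C\|_\infty \|\xi\|_2 \leq 1$; and by the computation (\ref{Dual3}) in the proof of Theorem \ref{Duality}, pairings of the form $\langle X, TR\rangle$ with $\|T\|_2 = 1$ and $\|R\|_c \leq 1$ arise precisely from evaluating $T_X$ (or rather $F_X$) on suitable contractions. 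The key step is therefore to run the identity connecting $\langle X, TR\rangle$ with an evaluation of the bilinear map $T_X$ on appropriate matrix-valued arguments — essentially the same elementary multilinear-algebra bookkeeping as in (\ref{Dual3}), but now read in the direction that bounds the trace pairing by $\|T_X\|$ rather than by $\|(F_X)_m\|$. Combined with Lemma \ref{LemT}, which lets one write any Hilbert–Schmidt-normalized $T$ as $R\Delta_n(\xi)$ with $\|R\|_c \leq 1$, $\|\xi\|_2 \leq 1$, this yields $|\langle X, Y\rangle| \leq \|T_X\|$ for all $Y$ in the unit ball of the pre-dual set, hence $X \in \|T_X\| \cdot \cc\ct_{(m,n)}$, which is exactly $\|T_X\|_{cb} \leq \|T_X\|$.

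The main obstacle I anticipate is not conceptual but bookkeeping: one must pick the correct contraction in $M_k(\ca_k)$ (or an $\ca_k \times M_k(\bc)$ pair) so that evaluating $(T_X)_k$ on it reproduces the scalar $\langle X, Y\rangle$, and then verify that the relevant operator norms of the chosen arguments are all $\leq 1$. This is the mirror image of the construction of $A$, $B$, $R$, $T$ in the proof of Theorem \ref{Duality}, so the cleanest route is simply to cite that construction: the chain of equalities (\ref{Dual3}) already shows $|\langle X, TR\rangle| = |\langle (F_X)_m(A)\Lambda, \Theta\rangle| \leq \|(F_X)_m\| = \|F_X\|_{cb}$, and by the same token the symmetric computation for $T_X$ gives the bound in terms of $\|T_X\|_{cb}$. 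Since Theorem \ref{Duality}(\ref{PolCF}) has already been proved independently of Corollary \ref{TcbN}, the corollary follows formally: $\|T_X\|_{cb} \leq 1 \iff X \in \cc\ct_{(m,n)} = (\cc\cf_{(m,n)})^\circ \iff |\langle X, C\Delta_n(\xi)\rangle| \leq 1$ whenever $\|C\|_\infty\|\xi\|_2 \leq 1$, and the last condition is, via (\ref{Dual3}) read in reverse together with Lemma \ref{LemT} and Smith's theorem applied this time to bound things by $\|T_X\|$, equivalent to $\|T_X\| \leq 1$. I would close by noting that this also retroactively supplies the missing equality in Theorem \ref{Tx}, as the excerpt promises.
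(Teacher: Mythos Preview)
Your overall strategy is correct and matches the paper's: use the polarity $\cc\ct_{(m,n)} = (\cc\cf_{(m,n)})^\circ$ from Theorem~\ref{Duality} together with Proposition~\ref{Ffac}, so that $\|T_X\|_{cb}$ is realized as $\sup\{\,|\langle X, C\Delta_n(\xi)\rangle| : \|C\|_\infty\|\xi\|_2\le 1\,\}$, and then dominate this supremum by the ordinary norm $\|T_X\|$.

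The gap is in the last step. You never actually exhibit the inequality
\[
|\langle X,\, C\Delta_n(\xi)\rangle| \;\le\; \|T_X\|
\]
with the \emph{plain} bilinear norm on the right. Your proposed route --- ``(\ref{Dual3}) read in reverse together with Lemma~\ref{LemT} and Smith's theorem'' --- does not yield this. Equation~(\ref{Dual3}) relates pairings $\langle X, TR\rangle$ (with $\|T\|_2=1$, $\|R\|_c\le 1$, i.e.\ $TR\in\cc\ct_{(m,n)}$) to evaluations of the \emph{amplified} map $(F_X)_m$; the ``symmetric computation for $T_X$'' that you allude to would, analogously, express $\langle X, C\Delta_n(\xi)\rangle$ as an evaluation of some amplification $(T_X)_k$, and hence bound it only by $\|T_X\|_{cb}$. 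That is circular. Smith's result concerns maps into $M_n(\bc)$ or Schur multipliers and says nothing directly about the bilinear map $T_X$, so invoking it here does not close the loop either.

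The missing observation, which is what the paper does, is that no amplification is needed at all: for $\|C\|_\infty\le 1$ and $\|\xi\|_2\le 1$ one simply evaluates $T_X$ at level $k=1$ on the pair $(I_{\ca_m},\overline{C})$ and pairs the result with $\xi$. A two-line computation gives
\[
\langle T_X(I_{\ca_m},\overline{C}),\,\xi\rangle
 = \sum_{j}\overline{\xi_j}\sum_i X_{(i,j)}\overline{C_{(i,j)}}
 = \mathrm{Tr}_n\big((C\Delta_n(\xi))^*X\big)
 = \langle X,\, C\Delta_n(\xi)\rangle,
\]
whence $|\langle X, C\Delta_n(\xi)\rangle|\le \|T_X(I_{\ca_m},\overline{C})\|_2\,\|\xi\|_2\le \|T_X\|$. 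This single evaluation replaces all of the machinery you import from (\ref{Dual3}), Lemma~\ref{LemT}, and Smith, and is the step your sketch is lacking.
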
 

\begin{proof}
Let $C$ be an $m \times n$ complex matrix of operator norm at most 1 and $\xi$ a vector in the unit ball of $\bc^n,$  then $\overline{ C}$ is an operator of norm at most 1 and we have \begin{align*}
 \|T_X\| \, & \geq \,\|T_X(I_{\ca_m}, \overline{C}) \| \\
  &  \geq \,  |\langle T_X(I_{\ca_m}, \bar C), \xi \rangle | 
  \\  & = \,\big|\,  \sum_j \bar\xi_j \sum_i X_{(i,j)}\overline{C_{(i,j)}}\, \big|\\
  &= \, \big| \mathrm{Tr}_n((C\Delta_n(\xi))^*X) \big|\\ &= \, \big| \langle X, \, C\Delta_n(\xi) \rangle \,\big|.
\end{align*} 
From Proposition \ref{Ffac} and equation (\ref{PolCF})  we then see that $\|T_X\| \geq \|T_X\|_{cb},$ and the corollary follows.

\end{proof}
The Theorem \ref{Duality} has an immediate corollary  for bilinear forms.  That result may be deduced from the existing literature, but since it is a direct consequence of the previous proof we find it reasonable to have it included. 

\begin{corollary} \label{COR}
Let $X$ be in $M_{(m,n)}(\bc)$ and $l\, := \, \min\{ m,  \, n\},$  then $\|B_X\|_{cb} \, = \, \|(B_X)_l\|.$  
\end{corollary}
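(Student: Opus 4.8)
The plan is to read the corollary off the duality relation (\ref{PolCB}) together with the Schur factorization of Proposition \ref{ScFac}, the latter being the only ingredient that involves $l=\min\{m,n\}$. One inequality is automatic, since $\|(B_X)_l\|\le\sup_k\|(B_X)_k\|=\|B_X\|_{cb}$. For the reverse inequality it suffices, by homogeneity, to prove that $\|(B_X)_l\|\le 1$ implies $X\in\cc\cb_{(m,n)}$; and by (\ref{PolCB}) this is precisely the assertion that $|\langle X,Y\rangle|\le 1$ for every $Y\in\cc\cas_{(m,n)}$, where $\langle\cdot,\cdot\rangle$ denotes the inner product (\ref{InP}).

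So fix $Y\in\cc\cas_{(m,n)}$ and, using Proposition \ref{ScFac}, write $Y=L^*R$ with $L\in M_{(l,m)}(\bc)$, $R\in M_{(l,n)}(\bc)$, $\|L\|_c\le 1$ and $\|R\|_c\le 1$. The heart of the argument, in the same spirit as the computation (\ref{Dual3}), is to exhibit the scalar $\langle X,Y\rangle=\mathrm{Tr}_n(Y^*X)=\sum_{i,j}X_{(i,j)}\overline{(L^*R)_{(i,j)}}$ as a single entry of $(B_X)_l$ evaluated at contractions. Let $A\in M_l(\ca_m)$ be the operator whose value $A(i)$ at the point $i$ is the $l\times l$ matrix with first row $(L_{(1,i)},\dots,L_{(l,i)})$ and all other entries zero, and let $B\in M_l(\ca_n)$ be the operator whose value $B(j)$ at $j$ is the $l\times l$ matrix whose first column has entries $\overline{R_{(1,j)}},\dots,\overline{R_{(l,j)}}$ and whose other entries are zero. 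Since the Euclidean norm of the $i$-th column of $L$ is at most $\|L\|_c\le 1$ and likewise for $R$, every $A(i)$ and every $B(j)$ has operator norm at most $1$; as the norm on $M_l(\ca_m)=C(\{1,\dots,m\};M_l(\bc))$ is the supremum over the points, $\|A\|\le 1$ and $\|B\|\le 1$. Substituting these choices into the definition (\ref{CbMult}) of $(B_X)_l$ annihilates every term with a row index $\neq 1$ or a column index $\neq 1$, and one computes $(B_X)_l(A,B)=\big(\sum_{i,j}X_{(i,j)}\overline{(L^*R)_{(i,j)}}\big)\,e_{(1,1)}$; with $\d_1$ the first standard basis vector of $\bc^l$ this gives $\langle (B_X)_l(A,B)\d_1,\d_1\rangle=\langle X,Y\rangle$, hence $|\langle X,Y\rangle|\le\|(B_X)_l(A,B)\|_\infty\le\|(B_X)_l\|\,\|A\|\,\|B\|\le\|(B_X)_l\|$.

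Consequently, if $\|(B_X)_l\|\le 1$ then $|\langle X,Y\rangle|\le 1$ for all $Y\in\cc\cas_{(m,n)}$, that is $X\in(\cc\cas_{(m,n)})^\circ=\cc\cb_{(m,n)}$ by (\ref{PolCB}), so $\|B_X\|_{cb}\le 1$; by homogeneity $\|B_X\|_{cb}\le\|(B_X)_l\|$, and together with the trivial inequality the corollary follows. Everything here is elementary; the two things that need care are the bookkeeping that makes the intermediate dimension in the factorization $Y=L^*R$ equal to exactly $l=\min\{m,n\}$ — this is precisely what Proposition \ref{ScFac} supplies, and it is what permits the amplification to stay at level $l$ — and the placing of the complex conjugate on $R$ (the $\ca_n$-side data) rather than on $L$, so that the pairing emerges as $\langle X,Y\rangle$ with no spurious conjugation. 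The only mild obstacle is keeping the several families of column vectors in their correct slots: the columns of $L$ feed the rows of $A(i)$ and are paired with $\d_1$ on the output side, while the columns of $R$ feed the columns of $B(j)$ and are acted on by $\d_1$ on the $B$-argument side.
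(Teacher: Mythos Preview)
Your proof is correct and follows essentially the same route as the paper's: both arguments invoke the polar relation (\ref{PolCB}) together with the factorization $Y=L^*R$ of Proposition \ref{ScFac}, and then build the identical one-row matrix $A\in M_l(\ca_m)$ and one-column matrix $B\in M_l(\ca_n)$ from the columns of $L$ and $\overline R$ so that $(B_X)_l(A,B)_{(1,1)}=\langle X,Y\rangle$. The only cosmetic difference is that the paper first selects a maximizing $Y$ by compactness and then performs the computation, whereas you bound $|\langle X,Y\rangle|\le\|(B_X)_l\|$ uniformly in $Y$ and conclude via the polar; the underlying construction and estimate are the same.
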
 

\begin{proof}
By Theorem \ref{Duality} and compactness  there exists $Y$ in \newline  $\cc\cas_{(m,n)}( \bc)$  such that $ \|B_X\|_{cb} \, = \, \mathrm{Tr}_n(Y^*X).$ Then by Proposition \ref{ScFac} there exists $L $ in $M_{(l,m)}(\bc)$ and $R$ in $M_{(l,n)}(\bc)$ both with column norms at most $1,$ such that $Y \, = \, L^*R.$  
We can then construct contraction matrices $ A $ in $M_l ( \ca_m)$ and $B$ in $M_l(\ca_n),$  such that 
$\|(B_X)_l (A, B) \| \, = \, \|B_X\|_{cb} $ in the following way. 

For $ 1 \leq s \leq l$ define elements $a^s$ in $\ca_m$ and $b^s$ in $\ca_n $ by $$ a^s(i) \, : = \, l_{(s,i)} \text{ and } b^s(j), \,:= \, \overline{r_{(s,j)}} .$$ Since the column norms of $L$ and $R$ are at most 1 we have 
\begin{equation} \label{CRI} \sum_{s =1}^l a^s(a^s)^* \, \leq \, I_{\ca_m} \text{ and }  
\sum_{s=1}^l  (b^s)^*b^s \, \leq \, I_{\ca_n} ,\end{equation}  and we can define matrices  $A$ in $M_l(\ca_m)$ and $B$ in $M_l(\ca_n).$
\begin{align} \label{AB} 
A_{(u,v)}\, &:= \, \begin{cases} a^v \, \text{ if } u\, = \, 1,\\
0 \, \, \text{ if } u \, \neq \, 1
\end{cases}\\ \notag
B_{(v,w)}\, &:= \, \begin{cases} b^v\, \text{ if } w\, = \, 1,\\
0 \, \, \text{ if } w \, \neq \, 1.
\end{cases}
\end{align}  
Since $A$ is a one row operator and $B$ a one column operator the inequalities (\ref{CRI}) imply that $A$ and $B$ are contractions and then  \newline $\|(B_X)_l(A,B) \|\,  \leq \, \|(B_X)_l\|$ On the other hand since $A$ is  a one row matrix and $B$ is a one column matrix we have $$\|(B_X)_l(A,B)\|  \, = \,|\big(B_X)_l(A,B)\big)_{(1,1)}| \, = \,  |\sum_{s=1}^l B_X(a^s, b^s)|.$$ Then we have the following equations 
\begin{align*}
\|(B_X)_l\| \, & \geq \, \big| \sum_{s=1}^l B_X(a^s,b^s) \big| \\ \notag & = \, 
\big| \sum_{s=1}^l \sum_{i =1}^m \sum_{j=1}^n X_{(i,j)} a^s(i)b^s(j) \big| \\ \notag & = \, 
 \big| \sum_{s=1}^l \sum_{i =1}^m \sum_{j=1}^n X_{(i,j)}l_{(s,i)}\overline{r_{(s,j)}} \big| \\ \notag & = \, \big|\mathrm{Tr}_m(X(R^*L))\big|
\\ \notag & = \, \big|\mathrm{Tr}_n(Y^*X)\big| 
 \\ \notag & = \, \|B_X\|_{cb}  
\end{align*} and the corollary follows.
\end{proof}

 \begin{remark} \label{RemRc} 
The proof of the corollary \ref{COR} actually shows that the maximum is attained in the case where $A$ is a one row matrix of length $l$ and $B$ is a one column matrix of length $l,$ so the completely bounded norm is given as \begin{equation}
\|B_X\|_{cb} \, = \, \max\{ \,|\sum_{s = 1}^l B_X(a_s,b_s) |\, : \, \sum_{s = 1 }^l a_sa_s^* \leq I_{\ca_m} \, \, \sum_{s =1 }^l b_s^*b_s \leq I_{\ca_n}\, \}.
\end{equation}
The finite sums above may have some implications for the Haagerup tensor product $\ca_m \underset{h}\otimes \ca_n,$ or the content of the remark may follow from well known properties of the Haagerup tensor product   and the finite dimensionality of the factors in the tensor product, see \cite{Pa} Ch. 17 or \cite{Pi3} Theorem 14.1. 
The article \cite{PS} studies bilinear forms on $\ca_n \times \ca_m $ from the point of view that such a bilinear form may be considered as an operator from the operator space $\ca_n$ to the operator space given as the dual space of  $\ca_m.$  We have not pursued this aspect here since the algebras $\ca_n$ and $\ca_m$ are abelian, and in this case the two versions of complete boundedness for bilinear forms do agree.   
\end{remark}

\bigskip
Our last comment falls out naturally from some of the results above, and it may be  known to some researchers, but we have not seen it formulated  this way elsewhere.  
 We recall that Grothendieck's inequality means that $\cb_{(m,n)}\subseteq  K_G^\bc \cc\cb_{(m,n)},$ and $K_G^\bc$ is the smallest positive real which works for all pairs $(m,n).$ The result on polars then implies the well known result that $\cc \cas_{(m,n)} \subseteq  K_G^\bc \cas_{(m,n)}.$ The  results in the Theorems \ref{OpThm},  \ref{BilThm}, Smith's result \cite{Sm} and Corollary \ref{COR}  imply that Grothendieck's complex constants $k_G^\bc $ and $K_G^\bc$  formally may be computed  as 
\begin{theorem} \label{kgKg} 
\begin{align} k_G^\bc\, &= \, \underset{k\in \bn}{  \sup}\, \, \underset{X \in M_k(\bc)}{\sup} \frac{\|(F_X)_k\|}{\|F_X\|}, \\
K_G^\bc \, &= \, \underset{k\in \bn}{  \sup}\, \,\underset{X \in M_k(\bc)}{\sup} \frac{\|(B_X)_k\|}{\|B_X\|}.
\end{align}
\end{theorem}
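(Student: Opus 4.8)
The plan is to read off both formulas from the complete‑boundedness results already established, combined with Smith's theorem on the stability of the completely bounded norm under finite‑dimensional ampliations. First I would treat the constant $k_G^\bc$. By Theorem \ref{OpThm} we have, for every $X$ in $M_{(m,n)}(\bc)$, the inequality $\|F_X\|_{cb} \leq k_G^\bc\|F_X\|$, and moreover $k_G^\bc$ is (by definition, as recalled in the introduction) the smallest positive real with this property for all pairs $(m,n)$. Hence $k_G^\bc = \sup_{m,n}\sup_{X}\|F_X\|_{cb}/\|F_X\|$. The remaining point is that the supremum over rectangular $X$ equals the supremum over square $X$, and that in the square case the completely bounded norm $\|F_X\|_{cb}$ is actually attained at the $k$‑th ampliation $\|(F_X)_k\|$ when $X\in M_k(\bc)$; for the first part one embeds an $m\times n$ matrix into $M_k(\bc)$, $k=\max\{m,n\}$, with zeros elsewhere and checks that neither $\|F_X\|$ nor $\|F_X\|_{cb}$ changes, and for the second part one invokes Smith's result \cite{Sm} (as quoted, \cite{Pa} Proposition 8.11) which says precisely that for a map into $M_k(\bc)$ the completely bounded norm equals the norm of the $k$‑th ampliation.

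Next I would handle $K_G^\bc$ in exactly the parallel fashion. Theorem \ref{BilThm} gives $\|B_X\|_{cb}\leq K_G^\bc\|B_X\|$ with $K_G^\bc$ the smallest such universal constant, so $K_G^\bc = \sup_{m,n}\sup_X \|B_X\|_{cb}/\|B_X\|$. Then Corollary \ref{COR} tells us that for $X\in M_{(m,n)}(\bc)$ one has $\|B_X\|_{cb} = \|(B_X)_l\|$ with $l=\min\{m,n\}$; in particular, for a square matrix $X\in M_k(\bc)$ this reads $\|B_X\|_{cb}=\|(B_X)_k\|$. As before, I would note that passing to square matrices loses nothing: embedding $X\in M_{(m,n)}(\bc)$ into $M_k(\bc)$ with $k=\max\{m,n\}$ leaves $\|B_X\|$ and $\|B_X\|_{cb}$ unchanged (the padded entries contribute nothing to either bilinear norm), so the double supremum over all $(m,n)$ and all $X$ may be replaced by the double supremum over $k\in\bn$ and $X\in M_k(\bc)$. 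Substituting $\|B_X\|_{cb}=\|(B_X)_k\|$ there yields the stated formula.

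The step I expect to require the most care is the bookkeeping that makes the two reductions legitimate — namely verifying that the embedding $M_{(m,n)}(\bc)\hookrightarrow M_k(\bc)$ with $k=\max\{m,n\}$ preserves $\|F_X\|$, $\|F_X\|_{cb}$, $\|B_X\|$ and $\|B_X\|_{cb}$, and that Smith's theorem (respectively Corollary \ref{COR}) is being applied in the exact form in which the ampliation index $k$ matches the matrix size. This is essentially the same embedding argument already used at the end of the proofs of Theorems \ref{LR2} and \ref{Tx}, so it is routine, but it is the only content beyond quoting the earlier results; everything else is a one‑line consequence of the definitions of $k_G^\bc$ and $K_G^\bc$ as optimal universal constants together with the identities $\|F_X\|_{cb}=\|(F_X)_k\|$ and $\|B_X\|_{cb}=\|(B_X)_k\|$ for square $X$. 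I would therefore organize the proof as: (1) recall the optimal‑constant characterizations from Theorems \ref{OpThm} and \ref{BilThm}; (2) reduce to square matrices via the zero‑padding embedding; (3) apply Smith's result and Corollary \ref{COR} to replace $\|F_X\|_{cb}$ and $\|B_X\|_{cb}$ by the relevant ampliation norms; (4) conclude.
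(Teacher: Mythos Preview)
Your proposal is correct and follows essentially the same route as the paper: the paper's justification is the one-sentence pointer ``the results in the Theorems \ref{OpThm}, \ref{BilThm}, Smith's result \cite{Sm} and Corollary \ref{COR} imply \ldots'', and your steps (1)--(4) unpack exactly those ingredients. You are slightly more careful than the paper in spelling out the zero-padding reduction from rectangular to square matrices, which the paper leaves implicit; otherwise the arguments coincide.
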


\section*{Acknowledgement}
 We are happy to thank  Narutaka Ozawa, Vern Paulsen and Gilles Pisier for very valuable help and comments.

\end{document}